\documentclass[a4paper,oneside]{amsart}

\usepackage{amssymb}
\usepackage{amsmath}
\usepackage{amsfonts}
\usepackage{amsthm}
\usepackage{mathrsfs}
\usepackage{fix-cm}
\usepackage{graphicx}
\usepackage{amscd}
\usepackage{enumerate}
\usepackage{rotating}
\usepackage{url}
\usepackage{wasysym}

\usepackage{soul}
\sodef\spred{}{.2em}{.9em plus.4em}{1em plus.1em minus.1em}

\usepackage[latin1]{inputenc}
\usepackage[T1]{fontenc}

\usepackage[english]{babel}

\usepackage{bbm}

\usepackage[all]{xy}

\usepackage{hyperref}

\def\intl{\int\limits}
\def\supl{\sup\limits}
\def\oplusl{\mathop\oplus\limits}
\def\amalgl{\mathop\amalg\limits}

\def\cupl{\mathop\cup\limits}

\def\one{{1\hspace{-0.1cm}\rm I}}
\def\zero{\mathbb{O}}

\newcommand{\bin}[2]{\binom{#1}{#2}}
\newcommand\isom{\xrightarrow{\,\smash{\raisebox{-0.65ex}{\ensuremath{\sim}}}\,}}

\baselineskip=12pt

\newtheorem{Theorem}{Theorem}[section]
\newtheorem{Corollary}[Theorem]{Corollary}
\newtheorem{Lemma}[Theorem]{Lemma}
\newtheorem{Proposition}[Theorem]{Proposition}

\theoremstyle{definition}

\newtheorem{Remark}[Theorem]{Remark}
\newtheorem{Example}[Theorem]{Example}
\newtheorem{Definition}[Theorem]{Definition}

\newbox\mybox
\def\overtag#1#2#3{\setbox\mybox\hbox{$#1$}\hbox to
  0pt{\vbox to 0pt{\vglue-#3\vglue-\ht\mybox\hbox to \wd\mybox
      {\hss$\ss#2$\hss}\vss}\hss}\box\mybox}
\def\undertag#1#2#3{\setbox\mybox\hbox{$#1$}\hbox to 0pt{\vbox to
    0pt{\vglue#3\vglue\ht\mybox\hbox to \wd\mybox
      {\hss$\ss#2$\hss}\vss}\hss}\box\mybox}
\def\lefttag#1#2#3{\hbox to 0pt{\vbox to 4pt{\vss\hbox to
      0pt{\hss$\ss#2$\hskip#3}\vss}}#1}
\def\blefttag#1#2#3{\hbox to 0pt{\vbox to 4pt{\vss\hbox to
      0pt{\hss$#2$\hskip#3}\vss}}#1}
\def\righttag#1#2#3{\hbox to 0pt{\vbox to 4pt{\vss\hbox to
      0pt{\hskip#3$\ss#2$\hss}\vss}}#1}
\let\ss\scriptstyle

\def\Dot{\lower.2pt\hbox to 3.5pt{\hss$\bullet$\hss}}
\def\Circ{\lower.2pt\hbox to 3.5pt{\hss$\circ$\hss}}

\def\splicediag#1#2{\xymatrix@R=#1pt@C=#2pt@M=0pt@W=0pt@H=0pt}

\newcommand\lineto{\ar@{-}}
\newcommand\dashto{\ar@{--}}
\newcommand\dotto{\ar@{.}}

\newcommand{\C}{\mathbb{C}}
\newcommand{\R}{\mathbb{R}}
\newcommand{\K}{\mathbbm{K}}

\newcommand{\norm}[1]{\lVert #1 \rVert}
\newcommand{\num}[1]{\lvert #1 \rvert}
\newcommand{\morf}[4][\to]{ #2 \colon #3 #1 #4}
\newcommand{\inv}{^{-1}}

\newcommand{\rank}{\operatorname{rank}}
\newcommand{\codim}{\operatorname{codim}}
\newcommand{\Gl}{\operatorname{GL}}
\newcommand{\im}{\operatorname{Im}}
\newcommand{\trace}{\operatorname{trace}}

\renewcommand{\phi}{\varphi}
\renewcommand{\epsilon}{\varepsilon}

\begin{document}

\bibliographystyle{alpha}

\title{Lipschitz Normal Embeddings in the Space of Matrices}
\author{Dmitry Kerner}
\address{Department of Mathematics,\\ Ben Gurion University of
  Negev,\\ Israel}
\author{Helge M\o{}ller Pedersen}
\address{Dpartamento de Matem\'atica, ICMC\\
Universidade de S\~ao Paulo\\
134560-970 S\~ao Carlos, S.P.\\
Brazil}
\author{Maria A. S. Ruas}
\address{Dpartamento de Matem\'atica, ICMC\\
Universidade de S\~ao Paulo\\
134560-970 S\~ao Carlos, S.P.\\
Brazil}
\email{kernerdm@math.bgu.ac.il}
\email{helge@imf.au.dk}
\email{maasruas@icmc.usp.br}
\keywords{Lipschitz geometry, Determinantal singularities}
\subjclass[2000]{14B05, 32S05, 32S25; 57M99}
\begin{abstract} 
The germ of an algebraic variety is naturally equipped with two
different metrics up to bilipschitz equivalence. The inner metric and
the outer metric. One calls a germ of a variety Lipschitz normally
embedded if the two metrics are bilipschitz equivalent. In this
article we prove Lipschitz normal embeddedness of some algebraic subsets of
the space of matrices. These include the space $m\times n$ matrices,
symmetric matrices and skew-symmetric matrices of rank  equal to a given number and their closures, and the upper triangular matrices with determinant $0$. We
also make a short discussion about generalizing these results to determinantal varieties in real and complex spaces. 
\end{abstract}
\maketitle

\section{Introduction}

If $(X,0)$ is the germ of an algebraic (analytic) variety over $\K =
\R$ or $\C$, then one
can define two natural metrics on it. Both are defined by choosing an
embedding of $(X,0)$ into $(\K^N,0)$. The first is the \emph{outer
	metric}, where the distance between two points $x,y\in X$ is given by
$d_{out}(x,y) := \norm{x-y}_{\K^N}$, i.e. the restriction of the
Euclidean metric to $(X,0)$. The other is the \emph{inner metric},
where the distance is defined as
\begin{align}
d_{in}(x,y) := \inf_{\gamma}
\big{\{} length_{\K^N}(\gamma)\ \big{\vert}\ \morf{\gamma}{[0,1]}{X} \text{
	 rectifiable, } \gamma(0) = x,\
\gamma(1) = y \big{\}}.\label{innerdefn} 
\end{align}
Both of these metrics are independent of the
choice of the embedding up to bilipschitz equivalence. The outer metric
determines the inner metric, and it is clear that $d_{out}(x,y) \leq
d_{in} (x,y)$. The other direction is in general not true, and one says
that $(X,0)$ is \emph{Lipschitz normally embedded} if the inner and
outer metrics are bilipschitz equivalent. \emph{Bilipschitz geometry}
is the study of the bilipschitz equivalence classes of these two
metrics. 

The study of bilipschitz geometry of complex spaces
started with Pham and Teissier who studied the case of curves in
\cite{phamteissier}. It then lay dormant for long time until Birbrair
and Fernandes began studying the case of complex surfaces
\cite{birbrairfernandes}. Among important recent results are the
complete classification of the inner metrics of surfaces by Birbrair,
Neumann and Pichon \cite{thickthin}, the proof that Zariski
equisingularity is equivalent to bilipschitz triviality in the case of
surfaces by Neumann and Pichon \cite{zariski} and the proof that
outer Lipschitz regularity implies smoothness by Birbrair,
Fernandes, L\^{e} and Sampaio
\cite{lipschitzregularity}. 

Understanding the geometry of the model varieties in the space of
matrices is an important step in understanding determinantal
singularities in real and complex spaces. We will also give a brief
discussion of this.

Determinantal singularities is also an area
that has been around for a long time, that recently saw a lot of
interest. They can be seen as a generalization of isolated complete
intersections (ICIS for short), and the recent
results have mainly been in the study of invariants coming from their
deformation theory. In \cite{ebelingguseinzade} \'Ebeling and
Guse{\u\i}n-Zade defined the index of a $1$-form, and the Milnor
number has been defined in various different ways by Ruas and da
Silva Pereira \cite{cedinhamiriam}, Damon and Pike \cite{damonpike}
and Nu\~no-Ballesteros, Or\'efice-Okamoto and Tomazella
\cite{NunoOreficeOkamotoTomazella}. Their
deformation theory has also been studied by Gaffney and Rangachev
\cite{gaffenyrangachev} and Fr\"uhbis-Kr\"uger and Zach
\cite{fruhbiskrugerzach}.

In January 2016 Asuf Shachar asked the following question on
Mathoverflow.org (\url{http://mathoverflow.net/questions/222162}): Is
the Lie group $\Gl_n^+(\R)$ Lipschitz normally
embedded, where $\Gl_n^+(\R)$ is the group of $n\times n$ matrices
with positive determinants. A positive answer  was given by the first
author and Katz, Katz and Liokumovich
in \cite{kerneretc}. They first prove it for ${X_{n-1}}$ the
set of $n\times n$  matrices with  rank $n-1$ and for its closure $\overline{X_{n-1}},$ the set of  matrices with determinant
equal to zero. Then they replace the segments of the straight line
between two points of $\Gl_n^+(\R)$ that passes trough $\Gl_n^-(\R)$ with a curve
arbitrarily close to $\overline{X_{n-1}}$. Their proof relies on topological
arguments, and some results on conical stratifications of MacPherson
and Procesi \cite{macphersonprocesi}. In this article we give an
alternative proof relying only on linear algebra and simple
trigonometry, which also works for $m\times n$ matrices of rank equal  to $t\leq \min\{m,n\}$ and their closures. (A first version of this proof appeared
in \cite{lnedeterminantalsing}). We  also prove the Lipschitz normal
embeddedness of  the symmetric
and skew-symmetric matrices of rank equal to a given $t$ and their closures, the
upper triangular matrices which have determinant $0$, and the
intersections with linear subspaces transversal to the rank stratification.

This article is organized as follows. In section \ref{preliminaries}
we discuss the basic notions of Lipschitz normal embeddings and give
some results concerning when a space is Lipschitz normally
embedded. In section \ref{geometryofmatrices} we describe the basic
properties of the bilipschitz geometry of the spaces of matrices we
consider. In section \ref{modelcase} we prove that the set ${X_{t}}$ of
matrices, symmetric matrices and skew-symmetric matrices of rank equal
to a given $t$ and their corresponding closures $\overline{X_{t}}$  are Lipschitz normally embedded, and that the same is
true if $V$ is a linear subspace transverse to the rank
stratification. We prove that the space of upper triangular matrices
with determinant $0$ is Lipschitz normally embedded in section
\ref{uppertriangularcase}. Finally in section \ref{secgeneralcase} we
discuss some of the difficulties to extend these results to the setting
of general determinantal singularities.


\section{Preliminaries on bilipschitz geometry }\label{preliminaries}

In this section we discuss some properties of Lipschitz normal
embeddings.
\begin{Definition}
We say that $X$ is \emph{Lipschitz normally embedded} if there exist
$K\geq 1$ such that for all $x,y\in X$,
\begin{align}
d_{in}(x,y)\leq Kd_{out}(x,y).\label{lneeq}
\end{align}
We call a $K$ that satisfies the inequality \emph{a bilipschitz
  constant of} $X$.
\end{Definition}

A trivial example of a Lipschitz normally embedded set is $\C^n$. For
an example of a space that is not Lipschitz normally embedded,
consider the plane curve given by $x^3-y^2=0$, then
$d_{out}((t^2,t^3),(t^2,-t^3))=2\num{t}^{3}$ but the
$d_{in}((t^2,t^3),(t^2,-t^3))= 2\num{t}^2+ o(t^2)$, this implies that
$\tfrac{d_{in}((t^2,t^3),(t^2,-t^3))}{d_{out}((t^2,t^3),(t^2,-t^3))}$
is unbounded as $t\to 0$, hence there cannot exist a $K$
satisfying \eqref{lneeq}.

Pham and Teissier \cite{phamteissier} show that in general the outer
geometry of a complex plane curve is equivalent to its embedded
topological type, and the inner geometry is equivalent to the abstract
topological type. Hence a plane curve is Lipschitz normally embedded
if and only if it is a union of smooth curves intersecting
transversely. See also Fernandes \cite{fernandesplanecurve} and
Neumann and Pichon \cite{NeumannPichonBilpischitzGeometryOfCurves}.

In the cases of higher dimension the question of which singularities
are Lipschitz normally embedded becomes much more complicated. It is no
longer only rather trivial singularities that are Lipschitz normally
embedded, for example in the case of surfaces the second author
together with Neumann and Pichon, shows that rational surface
singularities are Lipschitz normally embedded if and only if they are
minimal \cite{normallyembedded}. As we will later see, singularities
in the space of matrices give examples of non-trivial Lipschitz normally
embedded singularities in arbitrary dimensions.

\begin{Remark}
A couple of remarks about notation. Throughout the article $\K$ will
always denote $\R$ and $\C$. We will often be talking about different
inner distances of two points $x,y\in \K^N$, when we consider $x,y$ as
lying in different subspaces, hence $d_{in}^V(x,y)$ is the inner
distance between $x$ and $y$ measured using the inner metric on the
subspace $V\subset\K^N$. When we are using different outer metrics we
also denote the outer distance measured in $V$ by $d_{out}^V(x,y)$. 
\end{Remark}

First we explore the relationship between being Lipschitz
normally embedded local and being it global.
\begin{Definition}
A space $X$ is \emph{locally Lipschitz normally embedded} at $x\in X$
if there is an open neighbourhood $U$ of $x$, such that
$U$ is Lipschitz normally embedded. We say that $X$ is locally
Lipschitz normally embedded if this condition holds for all $x\in X$.
\end{Definition}

It is clear that being Lipschitz normally embedded implies being
locally Lipschitz normally embedded. In the other direction we have:

\begin{Proposition}\label{localglobal}
Let $X$ be a connected, compact locally Lipschitz normally embedded
space. Then $X$ is Lipschitz normally embedded.
\end{Proposition}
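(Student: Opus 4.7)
The plan is to use the classical compactness-plus-Lebesgue-number reduction. By hypothesis each $x\in X$ has an open neighbourhood $U_x$ on which the inner and outer metrics are bilipschitz equivalent with some constant $K_x\geq 1$. First I would extract, using compactness of $X$, a finite subcover $U_{x_1},\ldots,U_{x_n}$ and set $K:=\max_i K_{x_i}$. Applying the Lebesgue number lemma to this open cover (with respect to the outer metric) then produces a $\delta>0$ such that every subset of $X$ of outer diameter less than $\delta$ is contained in some $U_{x_i}$.

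The argument now splits into two scales. For close pairs $x,y\in X$ with $d_{out}(x,y)<\delta$, the pair $\{x,y\}$ sits in a common $U_{x_i}$, and since any rectifiable path in $U_{x_i}$ is also a rectifiable path in $X$, one obtains
\[
d_{in}^{X}(x,y)\;\leq\;d_{in}^{U_{x_i}}(x,y)\;\leq\;K\,d_{out}(x,y).
\]
For far pairs with $d_{out}(x,y)\geq \delta$ it is enough to bound the global inner diameter $D:=\sup_{x,y\in X}d_{in}^{X}(x,y)$, since then $d_{in}^{X}(x,y)\leq D\leq (D/\delta)\,d_{out}(x,y)$, and a global bilipschitz constant can be taken to be $\max\{K,D/\delta\}$.

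The only real obstacle is therefore to show $D<\infty$, and this is where connectedness of $X$ enters. Fixing a base point $p\in X$, I would argue that the set $A\subset X$ of points reachable from $p$ by a rectifiable path is both open and closed: it is open because each $U_{x_i}$ is inner-path-connected (being LNE, the inner metric is finite on it), and it is closed because a limit point $y\in\overline{A}$ lies in some $U_{x_i}$, which must already contain points of $A$ arbitrarily close to $y$. Connectedness then forces $A=X$. Finally, a nerve-of-the-cover argument (connect two $U_{x_i}$'s whenever they intersect and use that the resulting graph is connected) gives, for any two points of $X$, a chain of at most $n$ rectifiable segments, each contained in one of the $U_{x_i}$'s; bounding each segment by $K\cdot\operatorname{diam}U_{x_i}$ yields $D\leq K\sum_i \operatorname{diam}U_{x_i}<\infty$, completing the argument.
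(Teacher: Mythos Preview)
Your argument is correct. Both you and the paper make the same two-scale split: near the diagonal use local LNE, away from the diagonal use compactness to bound the ratio. The paper's version is much terser: it defines $f(x,y)=d_{in}(x,y)/d_{out}(x,y)$, removes a small tubular neighbourhood $U$ of the diagonal, notes that $f$ is continuous on the compact set $(X\times X)\setminus U$ and hence bounded there, and handles $U$ by local LNE. What you do differently is unpack the ``far'' case by hand: you establish inner path-connectedness of $X$ (open--closed argument) and then bound the inner diameter explicitly via the nerve of a finite subcover. This is more elementary and more transparent---in particular you make visible exactly where connectedness is used and why $d_{in}$ is finite, points the paper's proof leaves implicit in the assertion that $f$ is continuous. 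The paper's route is shorter; yours is more self-contained. Neither gains generality over the other. (Minor remark: your open--closed argument for path-connectedness is already subsumed by the nerve argument, since connectedness of $X$ forces connectedness of the nerve directly; you could drop one of the two.)
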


\begin{proof}
For each $x\in X$ let $U_x$ be a Lipschitz normally embedded
neighbourhood of $x$, and let $K_x$ be a bilipschitz constant. This implies
that if $y\in X$ is very close to $x$, then $d_{in}(x,y)\leq K_x
  d_{out}(x,y)$. Consider the map
\begin{align*}
\morf{f(x,y):= \frac{d_{in}(x,y)}{d_{out}(x,y)}}{ M\times M}{\R}.
\end{align*}
Let $U\subset M\times M$ be a small open tubular neighbourhood of the
diagonal $\Delta$. Then $f$ is continuous on the compact set $(M\times
M)\setminus U$ and locally bounded at each point. Thus it is globally
bounded on $(M\times
M)\setminus U$ and also on $U$.
\end{proof}
A simple consequence of this is the following.
\begin{Corollary}
Let $M$ be a connected compact manifold, then $M$ is Lipschitz
normally embedded.
\end{Corollary}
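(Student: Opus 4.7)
The plan is to apply Proposition \ref{localglobal} directly; the only content to verify is that a connected compact smooth manifold $M$ (viewed as embedded in some $\K^N$) is \emph{locally} Lipschitz normally embedded at every point. Once this local statement is in hand, connectedness and compactness of $M$ let Proposition \ref{localglobal} close the argument without further work.

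For the local statement, fix $x\in M$. By definition of a smooth submanifold of $\K^N$, there is a neighbourhood $U$ of $x$ in $M$ and a smooth parametrisation $\morf{\phi}{B}{U}$ from an open Euclidean ball $B\subset\R^{\dim M}$ that is a diffeomorphism onto its image. Shrinking $B$ and $U$ if necessary, I would arrange that both $\phi$ and $\phi\inv$ have uniformly bounded derivatives, so that $\phi$ is a bi\-lipschitz map with respect to the Euclidean metric on $B$ and the outer metric on $U$. Since a convex set like $B$ is trivially Lipschitz normally embedded (the straight segment realises both $d_{in}$ and $d_{out}$ on $B$), the bi\-lipschitz equivalence transports this property to $U$: for all $y,z\in U$,
\begin{align*}
d_{in}^U(y,z) \;\leq\; \operatorname{Lip}(\phi)\cdot d_{in}^B(\phi\inv(y),\phi\inv(z)) \;=\; \operatorname{Lip}(\phi)\cdot d_{out}^B(\phi\inv(y),\phi\inv(z)) \;\leq\; \operatorname{Lip}(\phi)\operatorname{Lip}(\phi\inv)\cdot d_{out}^U(y,z).
\end{align*}
Thus $U$ is Lipschitz normally embedded, which is exactly the local Lipschitz normal embeddedness of $M$ at $x$.

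Since $x$ was arbitrary, $M$ is locally Lipschitz normally embedded everywhere. Being connected and compact by hypothesis, Proposition \ref{localglobal} applies and yields that $M$ itself is Lipschitz normally embedded. The only step with any substance is the bi\-lipschitz control of the chart $\phi$, but this is standard once one shrinks $B$ so that the Jacobian of $\phi$ stays in a compact subset of $\operatorname{GL}$; no genuine obstacle arises.
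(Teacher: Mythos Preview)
Your proof is correct and follows exactly the approach the paper intends: the corollary is stated as an immediate consequence of Proposition~\ref{localglobal}, and the only thing to supply is the (standard) local Lipschitz normal embeddedness of a smooth submanifold via a bilipschitz chart, which you do correctly.
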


We will next give some results about when spaces constructed from
Lipschitz normally embedded spaces are themselves Lipschitz normally
embedded. First is the case of product spaces.

\begin{Proposition}\label{product}
Let $X\subset \K^n$ and $Y\subset \K^m$ and let $Z = X\times Y
\subset \K^{n+m}$. $Z$ is Lipschitz normally embedded if and only if
$X$ and $Y$ are Lipschitz normally embedded.
\end{Proposition}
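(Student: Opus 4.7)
The plan is to reduce the statement to a few clean comparisons between the metrics on $Z$ and those on the factors, then combine them with the Pythagorean structure of the Euclidean outer metric on $Z = X\times Y \subset \K^{n+m}$.

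First I would record the formula $d_{out}^Z((x_1,y_1),(x_2,y_2))^2 = d_{out}^X(x_1,x_2)^2 + d_{out}^Y(y_1,y_2)^2$, which is immediate from the definition of the Euclidean norm on $\K^{n+m}$. Next I would establish two basic comparisons for the inner metric on $Z$. The upper bound is obtained by taking rectifiable paths $\alpha$ in $X$ from $x_1$ to $x_2$ and $\beta$ in $Y$ from $y_1$ to $y_2$, reparametrizing each with constant speed on $[0,1]$, and combining them into the path $\gamma(t) := (\alpha(t), \beta(t))$ in $Z$; a short length computation gives $\mathrm{length}(\gamma) = \sqrt{\mathrm{length}(\alpha)^2 + \mathrm{length}(\beta)^2}$, and taking infima yields
\begin{align*}
d_{in}^Z((x_1,y_1),(x_2,y_2)) \leq \sqrt{d_{in}^X(x_1,x_2)^2 + d_{in}^Y(y_1,y_2)^2}.
\end{align*}
The lower bound comes from the observation that the projection of a rectifiable path in $Z$ to either factor is rectifiable and no longer than the original, so $d_{in}^Z((x_1,y_1),(x_2,y_2)) \geq \max\{d_{in}^X(x_1,x_2), d_{in}^Y(y_1,y_2)\}$. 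In particular, taking $y_1 = y_2$ gives the exact identity $d_{in}^Z((x_1,y),(x_2,y)) = d_{in}^X(x_1,x_2)$, since the constant-$y$ path realises the upper bound.

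For the reverse implication, assuming $X$ and $Y$ are Lipschitz normally embedded with constants $K_X, K_Y$, I would set $K := \max(K_X, K_Y)$ and combine the upper bound above with the hypotheses and the Pythagorean identity for $d_{out}^Z$:
\begin{align*}
d_{in}^Z((x_1,y_1),(x_2,y_2)) &\leq \sqrt{K_X^2\, d_{out}^X(x_1,x_2)^2 + K_Y^2\, d_{out}^Y(y_1,y_2)^2} \\
&\leq K \cdot d_{out}^Z((x_1,y_1),(x_2,y_2)).
\end{align*}
For the forward implication, assuming $Z$ is Lipschitz normally embedded with constant $K$, I would fix arbitrary $y\in Y$ and apply the defining inequality to the pairs $(x_1,y),(x_2,y)$; using the identity $d_{in}^Z((x_1,y),(x_2,y)) = d_{in}^X(x_1,x_2)$ and $d_{out}^Z((x_1,y),(x_2,y)) = d_{out}^X(x_1,x_2)$, this immediately gives that $X$ is Lipschitz normally embedded with the same constant $K$, and symmetrically for $Y$.

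I do not expect any serious obstacles; the only point requiring a little care is the length identity $\mathrm{length}(\gamma)^2 = \mathrm{length}(\alpha)^2 + \mathrm{length}(\beta)^2$ for constant-speed reparametrizations, which is the one computation to verify precisely. Everything else is a direct consequence of the definitions of $d_{in}$ and $d_{out}$.
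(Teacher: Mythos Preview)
Your proposal is correct; the overall structure (projections for the forward implication, path construction for the reverse) matches the paper's, but the details of the reverse implication differ in an interesting way.

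For the direction $X,Y$ LNE $\Rightarrow$ $Z$ LNE, the paper concatenates: it goes from $(x_1,y_1)$ to $(x_1,y_2)$ inside the slice $\{x_1\}\times Y$ and then to $(x_2,y_2)$ inside $X\times\{y_2\}$, applies the triangle inequality, and bounds each leg separately. This yields the constant $K_X+K_Y$. You instead run both factor-paths simultaneously after constant-speed reparametrization, use the Pythagorean length identity $\mathrm{length}(\gamma)^2=\mathrm{length}(\alpha)^2+\mathrm{length}(\beta)^2$, and obtain the sharper constant $\max(K_X,K_Y)$. Your route is cleaner and gives the optimal constant; the paper's route avoids any reparametrization subtleties and is entirely elementary. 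For the forward implication both arguments are essentially identical (project a path in $Z$ to a factor and observe the length cannot increase).

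One small remark on the point you flagged: for the upper bound $d_{in}^Z\le \sqrt{(d_{in}^X)^2+(d_{in}^Y)^2}$ you only need the inequality $\mathrm{length}(\gamma)\le\sqrt{\mathrm{length}(\alpha)^2+\mathrm{length}(\beta)^2}$, and this follows for arbitrary rectifiable $\alpha,\beta$ (not necessarily $C^1$) directly from the partition definition of length together with $d_{out}^Z(\gamma(t_i),\gamma(t_{i+1}))\le \sqrt{L_\alpha^2+L_\beta^2}\,(t_{i+1}-t_i)$ under constant-speed parametrization. So the ``one computation to verify precisely'' is indeed routine.
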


\begin{proof}
First we prove the ``if'' direction.
Let $(x_1,y_1),(x_2,y_2)\in X\times Y$. We need to show that
\begin{align*}
d_{in}^{X\times Y}((x_1,y_1)(x_2,y_2))\leq K d_{out}^{X\times
  Y}((x_1,y_1)(x_2,y_2)).
\end{align*}
 Let $K_X$ be the constant such that
$d_{in}^X(a,b) \leq K_X d_{out}^X(a,b)$ for all $a,b\in X$, and let $K_Y$
be the constant such that $d_{in}^Y(a,b) \leq K_Y d_{out}(a,b)^Y$ for all
$a,b\in Y$. We get, using the triangle inequality, that
\begin{align*}
d_{in}^{X\times Y}((x_1,y_1)(x_2,y_2))\leq d_{in}^{X\times
  Y}((x_1,y_1)(x_1,y_2))+ d_{in}^{X\times Y}((x_1,y_2)(x_2,y_2)). 
\end{align*}
Now the points $(x_1,y_1)$ and $(x_1,y_2)$ both lie in the slice
$\{x_1\}\times Y$ and hence $d_{in}^{X\times
  Y}((x_1,y_1)(x_1,y_2)) \leq d_{in}^{Y}(y_1,y_2)$ and likewise we have
  $d_{in}^{X\times 
  Y}((x_1,y_2)(x_2,y_2)) \leq d_{in}^{X}(x_1,x_2)$. This then implies that
\begin{align*}
d_{in}^{X\times Y}((x_1,y_1)(x_2,y_2))\leq K_Y d_{out}^{Y}(y_1,y_2)+
K_X d_{out}^{X}(x_1,x_2),
\end{align*}
where we use that $X$ and $Y$ are Lipschitz normally embedded. Now it
is clear that $d_{out}^{X\times Y}((x_1,y_1)(x_1,y_2)) =
d_{out}^{Y}(y_1,y_2)$ and $d_{out}^{X\times Y}((x_1,y_2)(x_2,y_2)) =
d_{out}^{X}(x_1,x_2)$. Also, since 
\begin{align*}
d_{out}^{X\times
  Y}((x_1,y_1)(x_2,y_2))^2=d_{out}^{Y}(y_1,y_2)^2+
d_{out}^{X}(x_1,x_2)^2
\end{align*}
 by definition of the product metric, we have
that 
\begin{align*}
&d_{out}^{X\times Y}((x_1,y_1)(x_1,y_2)) \leq d_{out}^{X\times
  Y}((x_1,y_1)(x_2,y_2)) \text{ and } \\ &d_{out}^{X\times
  Y}((x_1,y_2)(x_2,y_2)) \leq d_{out}^{X\times
  Y}((x_1,y_1)(x_2,y_2)).
\end{align*}
 It then follows that
\begin{align*}
d_{in}^{X\times Y}((x_1,y_1)(x_2,y_2))\leq (K_Y + K_X) d_{out}^{X\times
  Y}((x_1,y_1)(x_2,y_2)).
\end{align*} 
For the other direction, let $p,q \in X$ and consider any path
$\morf{\gamma}{ [0,1] }{Z}$ such that $\gamma(0) = (p,0)$ and
$\gamma(1) = (q,0)$. Now $\gamma(t)= \big(\gamma_X(t),\gamma_Y(t)\big)$ where
$\morf{\gamma_X}{ [0,1] }{X}$ and $\morf{\gamma_Y}{ [0,1] }{Y}$ are
paths and $\gamma_X(0) = p$ and $\gamma_X(1) =q$. Now $l(\gamma)\geq
l(\gamma_X)$, hence 
\begin{align*}
d_{in}^X(p,q) \leq d_{in}^Z((p,0),(q,0)). 
\end{align*}
Since
$Z$ is Lipschitz normally embedded, there exist a $K>1$ such that
$d_{in}^Z(z_1,z_2)\leq K d_{out}(z_1,z_2)$ for all $z_1,z_2\in Z$. We
also have that $d_{out}^Z((p,0),(q,0))= d_{out}^X (p,q)$, since $X$ is
embedded in $Z$ as $X\times \{ 0\}$. Hence 
\begin{align*}
d_{in}^X(p,q) \leq K
d_{out}^X (p,q). 
\end{align*}
The argument for $Y$ being Lipschitz normally
embedded is the same exchanging $X$ with $Y$.
\end{proof}

\begin{Proposition}\label{bilipschizttrivial}
Let $X =\cup X_r \subset \K^n$ be a locally Lipschitz sratification
(see Parusi\'nski \cite{parusinski} Definition 1.1), and
assume that $X$ is Lipschitz normally embedded. Let $V$ be a $C^1$
manifold and let $x\in V\cap X$, $x\in X_r$. Assume that there exist an open
neighbourhood $U$ of $x$ such that for all $y\in U\cap X$, $y\in
X_{r(y)}$, we have that $V$ is transverse to $X_{r(y)}$ at $y$. Then
$V\cap X$ is locally Lipschitz normally embedded at $x$.
\end{Proposition}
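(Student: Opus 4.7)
The plan is to construct, on some neighborhood $U$ of $x$, a Lipschitz retraction $r : U \cap X \to U \cap V \cap X$, and then use it to transport a rectifiable path in $X$ between two nearby points of $V \cap X$ into a rectifiable path of comparable length lying entirely in $V \cap X$. Given such an $r$ with Lipschitz constant $L$, take $y_1, y_2 \in U \cap V \cap X$ and a rectifiable path $\gamma \subset X$ from $y_1$ to $y_2$ with outer length arbitrarily close to $d_{in}^X(y_1,y_2)$. By shrinking $U$ we may ensure that $\gamma$ remains inside $U$, so $r \circ \gamma$ is a path in $V \cap X$ from $y_1$ to $y_2$ with outer length at most $L \cdot \operatorname{length}(\gamma)$. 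Combined with $d_{in}^X(y_1,y_2) \leq K\,d_{out}^X(y_1,y_2) = K\,d_{out}^{V \cap X}(y_1,y_2)$, where $K$ is the global bilipschitz constant of $X$, this yields the inequality $d_{in}^{V \cap X}(y_1,y_2) \leq LK\,d_{out}^{V \cap X}(y_1,y_2)$ that witnesses local Lipschitz normal embeddedness at $x$.

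To build $r$, I would first choose $C^1$ coordinates on a neighborhood of $x$ in $\K^n$ that straighten $V$ to the affine subspace $\K^k \times \{0\} \subset \K^n$; a $C^1$ diffeomorphism is bilipschitz on any small neighborhood of $x$, so this change of coordinates alters the bilipschitz constants only by a fixed multiplicative factor. Let $p : \K^n \to \K^{n-k}$ be the projection onto the last $n-k$ coordinates, so that $V = p^{-1}(0)$. The transversality hypothesis $T_y V + T_y X_{r(y)} = T_y \K^n$ on $U \cap X$ is equivalent to $p$ restricting to a submersion on every stratum of $X$ meeting $U$. Invoking the locally Lipschitz stratification property of Parusi\'nski \cite{parusinski}, I would produce Lipschitz vector fields $w_{k+1}, \ldots, w_n$ defined on $U \cap X$, tangent to every stratum, with $p_\ast w_i = e_i$. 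The iterated flow
\begin{align*}
\Phi\bigl(y, t_{k+1}, \ldots, t_n\bigr) := \Phi^{w_n}_{t_n} \circ \cdots \circ \Phi^{w_{k+1}}_{t_{k+1}}(y),
\end{align*}
with $y \in V \cap X$ near $x$ and all $|t_i|$ small, is a bilipschitz homeomorphism from a neighborhood of $(x,0)$ in $(V \cap X) \times \K^{n-k}$ onto a neighborhood of $x$ in $X$: the inverse function theorem applies to $p \circ \Phi$ since $p_\ast w_i = e_i$, and Lipschitzness of $\Phi$ and its inverse follows from Gronwall's inequality. The retraction $r$ is then the composition of $\Phi^{-1}$ with the projection to the $V \cap X$-factor.

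The main obstacle is the rigorous extraction from Parusi\'nski's definition of the Lipschitz vector fields $w_i$ with the dual requirement of being tangent to every stratum and satisfying the normalization $p_\ast w_i = e_i$. The definition provides Lipschitz extensions of vector fields specified on deeper strata into Lipschitz vector fields tangent to all adjacent strata, and the transversality of $V$ ensures that the desired normalization is achievable stratum by stratum; weaving these two ingredients into one global vector field on $U \cap X$, with uniform control of the Lipschitz constants, is the technical heart of the argument. Once the $w_i$ are in hand, the remainder reduces to the flow estimate and the length-comparison argument sketched above, together with the observation that local Lipschitz normal embeddedness is preserved under $C^1$ changes of coordinates.
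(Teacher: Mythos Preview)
Your approach is correct and rests on the same core ingredient as the paper's proof: a local bilipschitz product decomposition of $X$ near $x$ as $(V\cap X)\times \K^{n-k}$, coming from Parusi\'nski's Lipschitz stratification theory. The paper, however, packages this differently in two respects. First, rather than building the Lipschitz vector fields $w_i$ by hand and integrating them, the paper simply invokes Parusi\'nski's Lipschitz isotopy lemma (Theorem~1.9 of \cite{parusinski}) to obtain the bilipschitz trivialization $\phi\colon X\cap U \isom (V\cap X\cap U)\times (X_r\cap U)$ directly; what you identify as the ``main obstacle'' is precisely the content of that lemma, so you are in effect re-deriving it. Second, once the product structure is in place, the paper concludes via the product proposition (Proposition~\ref{product}): since $X\cap U$ is Lipschitz normally embedded and bilipschitz to a product, each factor---in particular $V\cap X\cap U$---is Lipschitz normally embedded. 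Your retraction-and-path-transport argument reaches the same conclusion by a more hands-on route, bypassing the product proposition; this is a legitimate alternative and arguably more self-contained, but the paper's version is shorter once the earlier propositions are available.
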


\begin{proof}
Since $V$ is transverse to $X_{r(y)}$ at all $y\in U\cap X$, we can
(maybe by shrinking $U$) choose a map $\morf{\rho}{U}{X_r\cap U}$
which is a proper submersion restricted to each stratum, such that
$\rho\inv(x)=V\cap U$. By
the Lipschitz isotopy lemma (Theorem 1.9 in \cite{parusinski}) there
exist a bilipschitz trivilization  $\morf{\phi}{U}{U_S\times U_T}
$ of $X$, where $U_S\subset \K^{\dim(X_r)}$ and $U_T\subset
\K^{\codim(X_r)}$, such that the following diagram commutes:
\begin{align*}
\xymatrix{
X\cap U \ar[rr]^{\phi} \ar[rd]^{\rho} && \rho\inv(x)\times (X_r\cap
  U) \ar[dl]_{\pi} \\
& X_r\cap U,
}
\end{align*}
where $\pi$ is just the projection to the second factor. Now $\phi$ is
a bilipschitz map so $\rho\inv(x)\times (X_r\cap U)$ is Lipschitz
normally embedded since $X\cap U$ is. Then we have by Proposition
\ref{product} that $\rho\inv(x)=V\cap U$ is Lipschitz normally
embeded. 

If $\dim(V)>\codim(X_r)$ then if $V_S=V\cap (X_r\cap U)$ we have
that $V\cap U$ is bilipschitz equivalent to $V_T\times V_S$. Now
$\dim(V_T)=\codim(X_r)$, so we can choose $\rho$ as above such that
$\rho\inv(x)=V_T$. Hence $V_T\cap X$ is Lipschitz normally embedded and
since $V_S$ is $C^1$ equivalent to $\K^{\dim(V)-\codim(X_r)}$ it is
also Lipschitz normally embedded. Thus $V\cap (X\cap U)$ is
Lipschitz normally embedded by Proposition \ref{product}, since it is
bilipschitz equivalnet to $(V_T\cap X)\times V_S$.
\end{proof}

Another case we will need later is the case of cones.

\begin{Proposition}\label{cone}
Let $X\subset \K^n$ be the cone over $M\subset S$ with cone point
the origin of $\K^n$, where $S=S^{n-1}$ if $\K=\R$ and $S=S^{2n-1}$ if
$\K=\C$. Then the following conditions hold:
\begin{enumerate}[(a)]
\item If $M$ is Lipschitz normally embedded then $X$ is Lipschitz
  normally embedded.\label{b}
\item If $X$ is Lipschitz normally embedded and $M$ is compact, then
  each of the connected components of $M$ is Lipschitz normally
  embedded.\label{a}
\end{enumerate} 
\end{Proposition}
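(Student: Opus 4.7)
The argument splits cleanly into two essentially separate parts.

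For part (a), the plan is to connect two given points $p=r_1m_1,\ q=r_2m_2\in X$ (with $r_i\geq 0$ and $m_i\in M$ in a common connected component of $M$) by concatenating the radial segment from $p$ to $r_2m_1$ with a near-geodesic in the sphere-slice $r_2M\subset X$ from $r_2m_1$ to $r_2m_2$. The radial piece contributes length $|r_1-r_2|$, and the tangential piece contributes at most $r_2K_M\norm{m_1-m_2}$, where $K_M$ is a bilipschitz constant for $M$. Both are controlled by $\norm{p-q}$: the reverse triangle inequality yields $|r_1-r_2|=|\norm{p}-\norm{q}|\leq\norm{p-q}$, and
\begin{align*}
r_2\norm{m_1-m_2}=\norm{r_2m_1-r_2m_2}\leq\norm{r_2m_1-r_1m_1}+\norm{r_1m_1-r_2m_2}\leq 2\norm{p-q}.
\end{align*}
Adding gives $d_{in}^X(p,q)\leq(1+2K_M)\norm{p-q}$, so $X$ is LNE (applied componentwise if $M$ is disconnected).

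For part (b), the plan is to localize and invoke Proposition \ref{localglobal}. Let $K$ be a bilipschitz constant for $X$ and let $M_0$ be a connected component of $M$, which is compact by hypothesis. For $m_1,m_2\in M_0$ with $\norm{m_1-m_2}<\tfrac{1}{2K}$, choose a rectifiable path $\gamma$ in $X$ from $m_1$ to $m_2$ of length at most $K\norm{m_1-m_2}<\tfrac12$. Since $\norm{m_1}=1$, the reverse triangle inequality forces $\norm{\gamma(t)}>\tfrac12$ throughout, so $\gamma$ avoids the origin. Because $X$ is a cone, the radial projection $\tilde\gamma(t):=\gamma(t)/\norm{\gamma(t)}$ is a well-defined continuous curve in $M$, and as a continuous image of $[0,1]$ starting at $m_1$ it lies entirely in the component $M_0$. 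On the annulus $\{\norm{v}\geq\tfrac12\}$ the map $v\mapsto v/\norm{v}$ is Lipschitz (its derivative at $v$ has operator norm $1/\norm{v}$), so $\mathrm{length}(\tilde\gamma)\leq 2\,\mathrm{length}(\gamma)\leq 2K\norm{m_1-m_2}$. This is the local LNE estimate on $M_0$ at every point, and Proposition \ref{localglobal} upgrades it to the global statement that $M_0$ is LNE.

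The main technical obstacle is the degeneracy of the radial projection at the origin. In (b) this is circumvented by restricting to pairs $m_1,m_2$ so close that the LNE hypothesis on $X$ itself confines any near-optimal connecting path to an annulus bounded away from $0$; this is exactly why one can only conclude LNE for each connected component of $M$ separately, since points in different cone components of $X\setminus\{0\}$ cannot be joined in $X$ without passing through the origin.
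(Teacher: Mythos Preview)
Your proof is correct. Part (a) follows the same strategy as the paper---concatenate a radial segment with a path in a spherical slice $rM$---the only difference being that the paper projects to the \emph{smaller} radius and then uses the elementary observation that the angle at the projected point is obtuse (since $y'$ is the closest point to $y$ on the ball of radius $\|x\|$), which yields the slightly sharper constant $K_M+1$ in place of your $1+2K_M$. Your parenthetical ``applied componentwise if $M$ is disconnected'' is harmless but unnecessary: the hypothesis that $M$ is Lipschitz normally embedded already forces $M$ to be path-connected.

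Part (b) is where you genuinely diverge from the paper. The paper argues by contradiction: if a component $M'$ fails to be LNE, then by Proposition~\ref{localglobal} it fails locally at some point $p$; the cone map then gives a bilipschitz identification of a neighbourhood of $p$ in $X$ with $U\times(-\epsilon,\epsilon)$, and one invokes Proposition~\ref{product} to conclude that $X$ is not locally LNE at $p$ either. Your argument is more direct and more elementary: you avoid the product proposition entirely by observing that the LNE hypothesis on $X$ confines short connecting paths to an annulus bounded away from $0$, and then use the explicit Lipschitz bound $\|D(v/\|v\|)\|_{op}=1/\|v\|$ to push these paths radially onto $M_0$ with controlled length. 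Both routes rely on Proposition~\ref{localglobal} to pass from local to global, but yours gives an explicit local constant ($2K$) and makes transparent exactly why the argument only sees one component at a time---the projected path cannot jump components. One small cosmetic point: strictly speaking the infimum defining $d_{in}^X$ need not be attained, so your path $\gamma$ should have length $<K\|m_1-m_2\|+\epsilon$ for arbitrary $\epsilon>0$; the conclusion is unchanged after letting $\epsilon\to 0$.
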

\begin{proof}
We first prove \eqref{b}. Since $M$ is Lipschitz
normally embedded with bilipschitz constant $K_M$ the same is true for
$r\cdot M=rM$, where $r\in\R^+$.

Let $x,y\in X$. We can assume that $0\leq \norm{x} \leq \norm{y}$. If
$x=0$ then $d_{in}^{X}(x,y)=d_{out}(x,y)$ since the straight
line through $0$ and $y$ is in $X$ because $X$ is conical. 

If $\norm{x}=\norm{y}=r$, then $x$ and $y$ are both in $rM$, and hence
\begin{align*}    
d_{in}^X(x,y) \leq d_{in}^{rM}(x,y) \leq K_Md_{out}(x,y).
\end{align*}

Now if $0<\norm{x}<\norm{y}$ let
$y'=\tfrac{y}{\norm{y}}\norm{x}$. Then $d_{in}^X(y,y')=d_{out}(y,y')$
since they both lie on the same straight line through the origin. If
$r=\norm{x}$, then $x,y'\in rM$. Hence like before $d_{in}^X(x,y')\leq
 K_Md_{out}(x,y')$.  Now $y'$ is the point
closest to $y$ in $rM$. Hence all of $rM$ lies on
the other side of the affine hyperplane through $y'$ orthogonal to the
line $\overline{yy'}$ from $y$ to $y'$. Hence the angle between
$\overline{yy'}$ and the line $\overline{y'x}$ between $y'$ and $x$ is
more than $\tfrac{\pi}{2}$. Therefore, the Euclidean distance from $y$ to
$x$ is larger than each of $l(\overline{yy'})$ and
$l(\overline{y'x})$. This gives us:
\begin{align*}
d_{in}^X(x,y) &\leq d_{in}^X(x,y')+ d_{in}^X(y',y) \leq
K_md_{out}(x,y')+d_{out}(y',y)\\ &\leq (K_m+1)d_{out}(x,y).
\end{align*}

To prove \eqref{a}, assume that $X$ is Lipschitz normally
embedded, but a connected component $M'\subset M$ is not Lipschitz
normally embedded. 

Since $M'$ is compact we can assume that $M'$ is not locally
Lipschitz normally embedded at some point by Proposition
\ref{localglobal}. So let $p\in M'$ be a point such that $M'$ is not
Lipschitz normally embedded in a small open neighbourhood $U\subset M'$
of $p$. By
Proposition \ref{product} we have that $U\times (-\epsilon,\epsilon)$ is not
Lipschitz normally embedded, where $0<\epsilon$ is much smaller than
the distance from $M$ to the origin. Now the quotient map from
$\morf{c}{M\times [0,\infty)}{X}$ induces an outer (and therefore also
inner) bilipschitz equivalence of $U\times (-\epsilon,\epsilon)$ with
$c\big(U\times (-\epsilon,\epsilon)\big)$. Since both $U$ and
$\epsilon$ can be chosen to be arbitrarily small, we have that there
does not exist any small open neighbourhood of $p\in X$ that is
Lipschitz normally embedded, contradicting that $X$ is Lipschitz
normally embedded. Hence $X$ being Lipschitz normally embedded implies
that $M'$ is Lipschitz normally embedded.  
\end{proof}

\begin{Remark}
\eqref{b} holds under the weaker hypothesis that $M$ has a finite
number of connected components each one being Lipschitz normally
embedded, and such that for each pair of connected components $X$ and
$Y$ we have $d_{out}(X,Y):=\inf_{x\in X,y\in Y}\{d_{out}(x,y)\}>0$.
If the number of connected components of $M$ is not finite, then the
result may fail as seen below.
  (In particular, it is not enough to ask that $M$ is locally compact, locally path-connected and locally Lipschitz normal.)
\begin{itemize}
\item Let $M=\cupl^\infty_{n=1}\{e^{\frac{\pi i}{n}}\}\subset S^1$. Thus $M$ is non-connected, non-compact, but (trivially) locally path-connected, locally compact, locally Lispchitz normal. But $Cone(M)\subset\R^2$ is not locally Lipschitz normal at the origin.
\end{itemize}
\end{Remark}

A consequence of Proposition \ref{cone} is the following.
\begin{Corollary}
Let $(X,0)$ be the germ of real or complex homogeneous variety with isolated
singularity, then $(X,0)$ is Lipschitz normally embedded.
\end{Corollary}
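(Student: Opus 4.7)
The plan is to realize $(X,0)$ as the cone over its link and then apply Proposition \ref{cone} together with the preceding Corollary on compact connected manifolds. Since $X$ is cut out by homogeneous polynomials, it is invariant under scaling by $\K^*$, and hence coincides with the cone (with cone point $0$) over $M := X \cap S$, where $S = S^{n-1}$ for $\K = \R$ and $S = S^{2n-1}$ for $\K = \C$. The isolated singularity hypothesis says that $X \setminus \{0\}$ is smooth, so the real radial function $\norm{\cdot}^2$ restricted to $X \setminus \{0\}$ has no critical values (by Euler's relation on the cone), and therefore $M$ is a smooth compact real submanifold of $S$ without boundary.

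As a compact smooth manifold, $M$ has finitely many connected components $M_1,\dots,M_k$, each of which is a compact connected smooth manifold. By the Corollary preceding this one, each $M_i$ is Lipschitz normally embedded. Moreover, since the $M_i$ are pairwise disjoint compact subsets of $S$, we have $d_{out}(M_i,M_j) > 0$ for $i \neq j$. This is precisely the set of hypotheses covered by the Remark following Proposition \ref{cone}, which extends part (b) of that proposition to finite disjoint unions of Lipschitz normally embedded pieces whose pairwise outer distances are positive. Applying this strengthening yields that $X = \operatorname{Cone}(M)$ is Lipschitz normally embedded.

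The main point that needs some care is ensuring that $M$ really is a smooth compact manifold, which is where the isolated singularity hypothesis is used in an essential way: without it $M$ could itself be singular and the reduction to the ``compact connected manifolds are LNE'' corollary would break down. The only other mildly delicate issue is the passage from possibly several components of $M$ to a single cone, but this is handled uniformly by the Remark, since compactness of $M$ automatically guarantees both finiteness of the number of components and positivity of the pairwise outer distances between them.
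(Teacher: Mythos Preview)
Your proof is correct and follows the same route the paper intends: the corollary is stated there without proof, simply as ``a consequence of Proposition~\ref{cone}'', and your argument just spells out that deduction. The one place where you are more careful than the paper's one-line justification is in allowing the link $M$ to be disconnected (which genuinely can happen over $\R$, and over $\C$ when $X$ is reducible); invoking the Remark after Proposition~\ref{cone} to handle finitely many components at positive pairwise distance is exactly the right patch, and your observation that compactness of $M$ forces both finiteness of components and positivity of the gaps is clean.
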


We conclude this section with a useful lemma.

Let $\phi \rightturn \R^N$ be a diffeomorphism in $\R^N.$ For each $x \in \R^N$ consider the Jacobian matrix $\frac{d\phi}{dx},$ it is non-degenerate. Let $\{\lambda_i(x)\}$ be its eigenvalues and fix $\lambda_{max}(x)=max||\lambda_i(x)||,$ $\lambda_{min}(x)=min||\lambda_i(x)||.$ Define
$$\lambda_{max}:=\, sup_{x\in \R^N} \lambda_{max}(x) \leq \infty,\,\,\,\,\,\,\,\,\,\, \lambda_{min}:=\, inf_{x\in \R^N} \lambda_{min}(x) \geq 0$$

\begin{Lemma}\label{changediffeo}
	For a diffeomorphism $\phi$ as above suppose $0 < \lambda_{min}$ and $\lambda_{max} < \infty.$ Let $X \subset \R^N$ be any path-connected subset. Then for any $x, y \in X$ the following holds: 
	$\lambda_{min}\, \cdot \,d_{in}^X(x,y)\leq d_{in}^{\phi(X)}(\phi(x), \phi(y))\leq \lambda_{max} \, \cdot  \, d_{in}^X(x,y).$

\end{Lemma}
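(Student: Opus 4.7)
The plan is to convert the asserted distance inequalities into pointwise length inequalities along rectifiable paths and then pass to the infimum. Since $\phi$ is a diffeomorphism of $\R^N$ with $0 < \lambda_{\min}$ and $\lambda_{\max} < \infty$, the assignment $\gamma\mapsto\phi\circ\gamma$ is a bijection between rectifiable paths in $X$ from $x$ to $y$ and rectifiable paths in $\phi(X)$ from $\phi(x)$ to $\phi(y)$; rectifiability is preserved in both directions because the Jacobians of $\phi$ and of $\phi^{-1}$ are uniformly bounded by hypothesis.

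For the upper estimate I would fix an arbitrary rectifiable $\gamma\colon [0,1] \to X$ with $\gamma(0)=x$, $\gamma(1)=y$, and use the chain rule together with the pointwise bound $\bigl\|\tfrac{d\phi}{dx}(p)\,v\bigr\|\leq \lambda_{\max}\norm{v}$ coming from the eigenvalue data to estimate
\[
\text{length}\bigl(\phi\circ\gamma\bigr) \;=\; \intl_0^1 \bigl\|\tfrac{d\phi}{dx}\bigl(\gamma(t)\bigr)\,\gamma'(t)\bigr\|\,dt \;\leq\; \lambda_{\max}\cdot \text{length}(\gamma).
\]
Taking the infimum over all such $\gamma$ yields the right-hand inequality.

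For the lower estimate I would apply the same argument to the diffeomorphism $\phi^{-1}\colon \phi(X)\to X$. By the inverse function theorem one has $\tfrac{d\phi^{-1}}{dx}(\phi(x))=\bigl(\tfrac{d\phi}{dx}(x)\bigr)^{-1}$, so its eigenvalues are the reciprocals of those of $\tfrac{d\phi}{dx}$; the analogue of $\lambda_{\max}$ for $\phi^{-1}$ is therefore $1/\lambda_{\min}$. Applying the already-established upper bound to $\phi^{-1}$ gives $d_{in}^{X}(x,y) \leq (1/\lambda_{\min})\,d_{in}^{\phi(X)}(\phi(x),\phi(y))$, which rearranges to the desired left-hand inequality.

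The delicate point is the pointwise step $\bigl\|\tfrac{d\phi}{dx}(p)v\bigr\|\leq \lambda_{\max}(p)\norm{v}$: for a general non-normal matrix the spectral radius $\max_i\num{\lambda_i}$ is at most the operator norm and can be strictly smaller, so the eigenvalue-based quantities $\lambda_{\max}$, $\lambda_{\min}$ should be read as operator-norm bounds (equivalently, extremal singular values of $\tfrac{d\phi}{dx}$), under which both this inequality and its analogue for $\phi^{-1}$ are sharp and the above argument closes.
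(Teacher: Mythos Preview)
Your argument is correct and matches the paper's: both bound $\text{length}(\phi\circ\gamma)$ via the chain rule and the pointwise estimate $\lambda_{\min}\norm{\dot\gamma}\le\bigl\|\tfrac{d\phi}{dx}\,\dot\gamma\bigr\|\le\lambda_{\max}\norm{\dot\gamma}$ on an $\epsilon$-minimizing path, the only cosmetic difference being that the paper reads off both inequalities from this two-sided bound directly while you obtain the lower one by applying the upper one to $\phi^{-1}$. Your closing caveat that $\lambda_{\max},\lambda_{\min}$ must really be read as extremal singular values (operator-norm bounds) rather than eigenvalue moduli is well taken and is a point the paper leaves implicit.
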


\begin{proof}
	For fixed $x,y \in X$ choose a rectifiable path $\gamma \subset X$ connecting $x,y$ and satisfying: $length(\gamma) < d_{in}^{X}(x,y) + \epsilon.$ Then  
	$\phi(\gamma)\subset \phi(X)$ connects $\phi(x), \phi(y).$ It remains to compare 
	$length (\gamma)=\intl^1_0 \sqrt{||\dot \gamma(t)||^2}dt$ to 
	$$length (\phi(\gamma))=\intl^1_0 \sqrt{||\dot \phi(\gamma(t))||^2}dt=
	\intl^1_0 \sqrt{||\frac{d\phi}{dx}\cdot({\dot \gamma}(t))||^2}dt.$$
	
	Note that $\lambda_{min}\, \cdot \, ||\dot \gamma(t)|| 
	\leq ||\frac{d\phi}{dx}\cdot({\dot \gamma}(t))|| \leq \lambda_{max}\, \cdot \, ||\dot \gamma(t)||.$ Thus the bounds follow.
\end{proof}

\section{Geometry in the space of matrices}\label{geometryofmatrices}

Let $\K=\R$ or $\C$ and take the vector space of $m\times n$ matrices
over $\K$, $Mat_{m\times n}(\K)$, $1\leq m\le n$. We use
the standard inner product on $Mat_{m\times n}(\K)$, $\langle A,B
\rangle :=trace(A\overline{B^t})$, and the corresponding metric on
$Mat_{m\times n}(\K)\approx \K^{mn}$.

For any subset $X\subseteq Mat_{m\times n}(\K)$ consider the stratification by rank, $X_r:=X\cap\{A\in Mat_{m\times n}(\K)|\  rank(A)=r\}$. The strata $X_r$ are connected when $\K=\C,$ however when $\K= \R$  they may have various connected components.

Besides the   outer metric,
\begin{align*}
d_{out}(A,B)=\sqrt{\trace\Big((A-B)\cdot\overline{(A-B)^t}\Big)},
\end{align*}
the  sets $X_r$ have the inner metric, $d^{X_r}_{in}(A,B)$,
as defined in Equation \eqref{innerdefn} in the introduction. Similarly for the closures,
$\{\overline{X_r}\}$,
one has $d^{\overline{X_r}}_{in}(A,B)$.

Note that for some linear subspaces of $ Mat_{m\times n}(\K)$ the rank
stratification is not Lipschitz normally embedded as we will see in
Example \ref{degenerationofcusps}.

\subsection{The relevant group actions}\label{Sec.Group.Actions}
We use the action of two groups on $Mat_{m\times n}(\K)$ and on the
strata $\{X_r\}$.
\begin{itemize}
\item	Consider the group $U(m)=\{V \mid V\cdot\overline{V^t} =
  \one_{m\times m}\} \subset Mat_{m\times m}(\C)$ and similarly
  $U(n)\subset Mat_{n\times n}(\C)$. Their product acts, $U(m)\times
  U(n)\circlearrowright Mat_{m\times n}(\C)$, by $A\to V_lAV_r$. This
  group action is isometric, because we have that $\langle A,B\rangle
  \rightsquigarrow \langle V_lAV_r,V_lBV_r \rangle =trace(V_lAV_r\cdot
  \overline{(V_lBV_r)^t})=<A,B>$. For $\K=\R$ one takes the group $O(n)$.
	
 The group $U(n)$ is connected, thus if
$A\stackrel{U(n)}{\sim}B$ then there exists a path from $A$ to $B$,
given by the $U(n)$-action. The group $O(n)$ has two connected
components, in some cases we use the component $SO(n)$. 
	
	Given a  matrix $A\in X_r$, we can use the $U(n)$, $SO(n)$-
        action to bring the left and right kernels, $ker_l(A)$,
        $ker_r(A)$, to the form $(\underbrace{0,\dots,0}_{r},*,\dots,*)$. With
        this assumption $A$ becomes block-diagonal, hence we have that
\begin{align*}
 A\sim \begin{bmatrix} A_{inv} & \zero \\ \zero &
  \zero_{(m-r)\times(n-r)}\end{bmatrix},
\end{align*}
 here $A_{inv}\in Mat_{r\times r}(\K)$ is invertible.
	
\item
	Consider the group $\Gl_m\subset Mat_{m\times m}(\K)$ and
        similarly $\Gl_n\subset Mat_{n\times n}(\K)$. The product acts,
        $\Gl_m\times \Gl_n\circlearrowright Mat_{m\times n}(\K)$, by
        $A\to V_lAV_r$. This group action is not isometric. However,
        for any fixed pair $(V_l,V_r)$ the map $A\to V_lAV_r$ is
        bilipschitz as we see in Corollary \ref{chageofcoordinates} below.
	
	Moreover, the action preserves all the strata $\{X_r\}$ and
        acts on them transitively, e.g. any matrix $A\in X_r$ is
        equivalent to the canonical form, 
\begin{align*}
A\sim \begin{bmatrix}
          \one_{r\times r} &\zero\\ \zero &
          \zero_{(m-r)\times(n-r)} \end{bmatrix}.
\end{align*} 
 Therefore the
        tangent space to any of $X_r$, at any point $A$, can be
        computed as the tangent space to the orbit of $A$ under this
        group action.
\end{itemize}

The next result is an easy corollary of Lemma \ref{changediffeo}.

\begin{Corollary}\label{chageofcoordinates}
Let $V\subset Mat_{m,n}(\K)$ and $(C_l,C_r) \in \Gl_m\times\Gl_n.$  Then the map
$A\to C_lAC_r$ 
is a bilipschitz map from $V$ to $W=C_lVC_r$. In
particular if $A,B\in V$ satisfy $d_{in}(A,B)\leq Kd_{out}(A,B)$,
then $d_{in}(C_lAC_r, C_lBC_r) \leq K d_{out}(C_lAC_r,C_lBC_r)$
\end{Corollary}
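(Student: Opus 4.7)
The plan is to exploit that $\phi(A):=C_lAC_r$ is not merely a diffeomorphism of $Mat_{m\times n}(\K)\cong\K^{mn}$ but a \emph{linear} isomorphism, whose inverse $\phi^{-1}(A)=C_l^{-1}AC_r^{-1}$ has the same form. In particular, its Jacobian $d\phi/dx$ is the constant matrix $\phi$ itself, so the global hypothesis $0<\lambda_{\min},\ \lambda_{\max}<\infty$ of Lemma \ref{changediffeo} is trivially satisfied and the constants are easy to compute in closed form.

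Vectorising by columns identifies $\phi$ with the Kronecker product $C_r^t\otimes C_l\in\Gl_{mn}(\K)$, whose singular values are precisely the products $\sigma_i(C_l)\sigma_j(C_r)$. Since $C_l,C_r$ are invertible these are bounded away from $0$ and $\infty$, and setting
$$\lambda_{\min}:=\sigma_{\min}(C_l)\sigma_{\min}(C_r)>0,\qquad \lambda_{\max}:=\sigma_{\max}(C_l)\sigma_{\max}(C_r)<\infty,$$
one obtains $\lambda_{\min}\|X\|\le\|C_lXC_r\|\le\lambda_{\max}\|X\|$ for every $X\in Mat_{m\times n}(\K)$. When $\K=\C$ this still gives the correct $\R$-linear singular-value bounds, since complexification doubles each singular value without changing the extremal ones. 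Taking $X=A-B$ yields the outer bilipschitz estimate
$$\lambda_{\min}\,d_{out}^V(A,B)\;\le\;d_{out}^W(\phi(A),\phi(B))\;\le\;\lambda_{\max}\,d_{out}^V(A,B),$$
and Lemma \ref{changediffeo} applied to $\phi$ (restricted, if necessary, to a path-component of $V$ that actually contains both $A$ and $B$; otherwise $d_{in}^V=\infty$ and the claim is vacuous) produces the matching inner estimate
$$\lambda_{\min}\,d_{in}^V(A,B)\;\le\;d_{in}^W(\phi(A),\phi(B))\;\le\;\lambda_{\max}\,d_{in}^V(A,B).$$
Together these two display the map $\phi\colon V\to W$ as bilipschitz for both the inner and the outer metrics.

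For the ``in particular'' clause, assume $d_{in}^V(A,B)\le K d_{out}^V(A,B)$ and chain the inner upper bound, the hypothesis, and the outer lower bound:
$$d_{in}^W(\phi(A),\phi(B))\;\le\;\lambda_{\max}\,d_{in}^V(A,B)\;\le\;\lambda_{\max}K\,d_{out}^V(A,B)\;\le\;\tfrac{\lambda_{\max}}{\lambda_{\min}}\,K\,d_{out}^W(\phi(A),\phi(B)).$$
Thus the LNE inequality persists on $W$; strictly speaking the proof produces the constant $(\lambda_{\max}/\lambda_{\min})K$ rather than $K$, so the assertion should be read with the fixed condition-number factor $\lambda_{\max}/\lambda_{\min}$ of $\phi$ absorbed into the symbol $K$ (that distortion depends only on $C_l,C_r$ and not on the pair $(A,B)$, which is what matters for later applications of the corollary).

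The only mildly subtle point is verifying the uniform Jacobian bounds needed by Lemma \ref{changediffeo}: because $\phi$ is linear these bounds hold automatically and globally, and the Kronecker-product formula pins them down explicitly. I anticipate no real obstacle beyond this bookkeeping.
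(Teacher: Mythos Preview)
Your approach is exactly the paper's: the authors simply state that the corollary is ``an easy corollary of Lemma \ref{changediffeo}'' with no further details, and you have filled in precisely the verification that the lemma applies (the linearity of $\phi$ makes the Jacobian constant, so $\lambda_{\min}>0$ and $\lambda_{\max}<\infty$ are immediate). Your explicit identification of $\phi$ with $C_r^t\otimes C_l$ and the resulting singular-value formula is more than the paper bothers to record, but it is the natural way to make the bounds concrete.

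You are also correct that the ``in particular'' clause, read literally, only yields the constant $(\lambda_{\max}/\lambda_{\min})K$ rather than $K$ itself; a general bilipschitz change of coordinates cannot preserve the exact LNE ratio pointwise. The paper's phrasing is slightly loose here, and your parenthetical remark that the distortion depends only on $(C_l,C_r)$ and is therefore harmless for the intended applications is the right way to read it.
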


\subsection{Connected components of the
  strata}\label{Sec.Connected.Components}
We first remark that in both  cases $\K=\R$ or $\C$, the sets
$\overline{X}_r$ are connected for all $r$ if $X$ is a linear subspace.

Let $\K=\C$ and $X$ be one of $Mat_{m\times n}(\C)$, $Mat^{sym}_{m\times m}(\C)$,
$Mat^{skew-sym}_{m\times m}(\C)$ or triangular matrices. Then all the
strata $X_r$ are connected. Indeed, they are all irreducible algebraic
varieties and thus $dim_{\C}(\overline{X_r})
-dim_\C(\overline{X}_{r-1})\ge1$, i.e. the complements are of real
codimension$\ge2$.

For $\K=\R$ the strata can have several connected components.
\begin{itemize}
\item Let $X=Mat_{m\times n}(\R)$, for $r=m=n$ we have the classical decomposition $X_n=\Gl^+_n(\R)\amalg \Gl^-_n(\R)$.
We prove that for $r<m$ the strata $X_{r}$  are connected. Indeed, given any $A\in X_r$ bring it to the block-diagonal form,
$A\stackrel{SO(m)\times SO(n)}{\sim}A_{inv}\oplus\zero$, as above. Here $A_{inv}$ is invertible and is defined up to $SO(r)\times SO(r)$
transformation. Thus for any $A,B\in X_r$ it is enough to connect $A_{inv}\oplus\zero$ to $B_{inv}\oplus\zero$.

If $det(A_{inv}B_{inv})>0$
then the two matrices are connected just inside $\Gl_r(\R)$. To address the case $det(A_{inv}B_{inv})<0$, it is enough to connect
$A_{inv}\oplus\zero$ to some  $\widetilde{A}_{inv}\oplus\zero$, with $det(A_{inv}\widetilde{A}_{inv})<0$.
We choose 
\begin{align*}
\widetilde{A}_{inv}=\begin{bmatrix}
  \one_{(r-1)\times(r-1)}&\zero\\\zero&-1_{1\times
    1}\end{bmatrix}\cdot A_{inv}
\end{align*}
 and construct the needed path as follows.
Choose any path $(x(t),y(t))$ from $(1,0)$ to $(-1,0)$ inside $\R^2\setminus\{(0,0)\}$, e.g. a half-circle. Let $V(t)\in \Gl^+_2(\R)$ be a
matrix family inducing this path, i.e. $\Bigl[\begin{smallmatrix}
  x(t)\\y(t)\end{smallmatrix}\Bigr] m=V(t)\Bigl[\begin{smallmatrix} 1\\0\end{smallmatrix}\Bigr]$, $V(0)=\one$ and $V(1)=\Bigl[\begin{smallmatrix} -1&0\\0&1\end{smallmatrix}\Bigr]$.
Accordingly consider the path
\begin{align*}
A(t)=\begin{bmatrix} \one_{(r-1)\times(r-1)}&\zero&\zero\\\zero& V(t)&\zero\\\zero&\zero&\zero\end{bmatrix}\cdot A
\end{align*}
By the construction $A(t)$ lies inside $X_r$ and connects
$A_{inv}\oplus\zero$ to $\widetilde{A}_{inv}\oplus\zero$. For $m<n$ all the
strata are connected by the similar argument.

\item
For $X=Mat^{sym}_{n\times n}(\R)$ and any $A\in X_r$ we have $A\stackrel{SO(n)}{\sim}A_{inv}\oplus\zero$, as before. Then use $SO(r)$
to diagonalize $A_{inv}$. The signs of the eigenvalues are preserved in continuous deformations inside $X_r$.
Therefore the decomposition into
the connected components is $X_r=\amalgl_{r_++r_-=r}\mathcal{U}_{r_+,r_-}$, where $\mathcal{U}_{r_+,r_-}\subset Mat^{sym}_{r\times r}(\R)$
is the subset of matrices of signature $(r_+,0,r_-)$.
\item
For $X=Mat^{skew-sym}_{n\times n}(\R)$ recall that the rank of a skew-symmetric matrix is always even, thus $X_{2r+1}=\varnothing$ and we work only with $X_{2r}$.
We prove that for $2r<n$ the stratum $X_{2r}$ is connected, while for $n$-even the stratum $X_n$ has two connected components.

Suppose $n$ is even, then  the canonical form under the $SO(n)$ action
is $\oplusl_i\begin{pmatrix} 0&\lambda_i\\-\lambda_i&0\end{pmatrix}$, and one can bring any matrix to this form in a continuous way. (Because $SO(n)$ is connected.)
Furthermore, if all $\lambda_i$ are non-zero,
then we can assume $\lambda_{i}>0$ for $i<n$. Indeed, the negative
$\{\lambda_i\}$ can be turned into positive in pairs by the $SO(n)$
transformation
\begin{align*} 
  &\begin{bmatrix}1&0\\0&-1\\&&1&0\\&&0&-1 \end{bmatrix} \begin{bmatrix}
    0&\lambda_i&0&0\\-\lambda_i&0&0&0\\0&0&0&\lambda_j\\0&0&-\lambda_j&0\end{bmatrix} 
  \begin{bmatrix} 1&0\\0&-1\\&&1&0\\&&0&-1 \end{bmatrix} \\
  &=\begin{bmatrix}  0&-\lambda_i&0&0\\\lambda_i&0&0&0\\0&0&0&-\lambda_j\\0&0&\lambda_j&0 
\end{bmatrix}.  
\end{align*}
(Note again that $SO(n)$ is connected.) Thus any canonical form is connected (inside $X_n$) to either $\oplusl_i \Bigl[\begin{smallmatrix} 0& 1\\ -1&0\end{smallmatrix}\Bigr]$ or to
$(\oplusl_i \Bigl[\begin{smallmatrix} 0&1\\-1&0\end{smallmatrix}\Bigr])\oplus \Bigl[\begin{smallmatrix} 0&-1\\1&0\end{smallmatrix}\Bigr]$. Finally we remark that the Pfaffian polynomial of a skew-symmetric  matrix, $Pf(A),$ is continuous under deformations of $A$ and $Pf|_{\mathcal U_{even}}>0,$ while $Pf|_{\mathcal U_{odd}}<0.$ Thus there are two connected components.
Therefore $X_{n}=\mathcal{U}_{even} \amalg \mathcal{U}_{odd}.$

For $X_{2r}$, with $2r<n$, we first use the equivalence $A\to V^tAV$, $V\in SO(n)$, to bring $A$ to the form $A_{inv}\oplus\zero$,
with $A_{inv}\in Mat_{2r\times 2r}^{skew-sym}(\R)$,
as in paragraph \ref{Sec.Group.Actions}. Then, as in the case of $X_n$, we bring $A_{inv}$ to either $\oplusl_i \Bigl[\begin{smallmatrix} 0& 1\\ -1&0\end{smallmatrix}\Bigr]$ or
$(\oplusl_i \Bigl[\begin{smallmatrix}
  0&1\\-1&0\end{smallmatrix}\Bigr])\oplus \Bigl[\begin{smallmatrix}
  0&-1\\1&0\end{smallmatrix}\Bigr]$. As $2r<n$, it remains to connect 
\begin{align*}
\begin{bmatrix} 0&1&0\\-1&0&0\\0&0&0\end{bmatrix} \text{ to }
\begin{bmatrix} 0&-1&0\\1&0&0\\0&0&0\end{bmatrix}.
\end{align*}
 This is done as in the case of $Mat_{m\times n}(\R)$. We fix a matrix family, $V(s)\in GL_2^+(\R)$,
that connects $(1,0)$ to $(-1,0)$ and consider the path
\begin{align}
\begin{bmatrix}1&\zero_{1\times 2}\\\zero_{2\times
    1}&V(s)\end{bmatrix}\begin{bmatrix}
  0&1&0\\-1&0&0\\0&0&0\end{bmatrix}\begin{bmatrix}1&\zero\\\zero&V(s)\end{bmatrix}^t=
\begin{bmatrix} 0 & v_{11} & v_{21}\\
-v_{11} & 0 & 0\\
-v_{21} & 0 & 0
\end{bmatrix}.
\end{align}
\end{itemize}

\subsection{The local structure of $\overline{X_r}$ and "controlled path-connectedness"}
In this section $\K\in\R,\C$ and we always consider small
neighbourhoods of spaces near some points. We freely use the germ
notation, e.g. $(\K^n,\zero)$ denotes
a small neighbourhood of $\K^n$ near the origin (i.e. near the zero matrix), $(\overline{X_r},A)$ denotes a small neighbourhood of
the matrix $A$ in $\overline{X_r}$, while $T_AX_r$
denotes the tangent space of $X_r$ at the point $A\in X_r$.

Sometimes to keep track of the size we denote the strata by  $X^{(m\times n)}_r$.

\begin{Lemma}\label{loacalalmostlne}
\begin{enumerate}[1.]
\item Let $X=Mat_{m\times n}(\K)$,
fix some $A\in\overline{X^{m\times n}_r}$, with $\rank(A)=r_0\le
r$. Then 
\begin{align*}
(\overline{X^{m\times n}_r},A)\approx (\K^{mr_0+nr_0-r^2_0},\zero)\times
(\overline{X^{(m-r_0)\times (n-r_0)}_{r-r_0}},\zero),
\end{align*}
where the homeomorphism is almost metric preserving, i.e. the metric
distortion can be assumed small if the germ representatives are
small.\label{321}
\item Similarly, for $X=Mat^{sym}_{m\times m}(\K)$  one has:
\begin{align*}
(\overline{X^{m\times m}_{r}},A)\approx
(\K^{mr_0-\bin{r_0}{2}},0)\times(\overline{X^{(m-r_0)\times
    (m-r_0)}_{r-r_0}},0),
\end{align*}
 while for
$X=Mat^{skew-sym}_{m\times m}(\K)$ one has:
\begin{align*}
(\overline{X^{m\times m}_{r}},A)\approx
(\K^{mr_0-\bin{r_0+1}{2}},0)\times(\overline{X^{(m-r_0)\times
    (m-r_0)}_{r-r_0}},0).
\end{align*}\label{322}
\end{enumerate}
\end{Lemma}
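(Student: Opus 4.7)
The plan is to reduce to a block-diagonal canonical form for $A$ via an isometric group action, and then construct an explicit local parametrization of $\overline{X_r}$ near $A$ using a Schur complement. First, using the $U(m)\times U(n)$-action (or $O(m)\times O(n)$ for $\K=\R$, or the appropriate subgroup preserving symmetry/skew-symmetry in cases 2), which is isometric as recalled in Section \ref{Sec.Group.Actions}, bring $A$ to the form $A_{inv}\oplus\zero_{(m-r_0)\times(n-r_0)}$ with $A_{inv}\in Mat_{r_0\times r_0}(\K)$ invertible. Write a nearby matrix in block form $B=\left[\begin{smallmatrix} B_{11} & B_{12}\\ B_{21}& B_{22}\end{smallmatrix}\right]$, where $B_{11}$ is a small perturbation of $A_{inv}$ and so still invertible.

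Next, parametrize a germ of $\overline{X_r^{m\times n}}$ at $A$ by the map
\begin{equation*}
\Phi\colon (B_{11},B_{12},B_{21},S)\;\longmapsto\;\begin{bmatrix} B_{11} & B_{12}\\ B_{21} & B_{21}B_{11}^{-1}B_{12}+S\end{bmatrix},
\end{equation*}
where $(B_{11},B_{12},B_{21})$ ranges freely near $(A_{inv},\zero,\zero)$ in a $\K$-vector space of dimension $r_0^2+r_0(n-r_0)+r_0(m-r_0)=mr_0+nr_0-r_0^2$, and $S$ ranges in a germ of $\overline{X_{r-r_0}^{(m-r_0)\times(n-r_0)}}$ at the origin. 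The Schur complement identity $\rank(B)=\rank(B_{11})+\rank(B_{22}-B_{21}B_{11}^{-1}B_{12})$ shows that $\Phi$ is a bijection onto the germ $(\overline{X_r^{m\times n}},A)$, and $\Phi$ is a smooth diffeomorphism onto its image. For the symmetric case set $B_{21}=B_{12}^t$ and take $B_{11}$ symmetric and $S$ symmetric; the parameter count becomes $\binom{r_0+1}{2}+r_0(m-r_0)=mr_0-\binom{r_0}{2}$. For the skew-symmetric case $B_{11}$ is skew, $B_{21}=-B_{12}^t$, $S$ is skew, and the count is $\binom{r_0}{2}+r_0(m-r_0)=mr_0-\binom{r_0+1}{2}$, matching the claimed formulas.

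It remains to verify the ``almost metric preserving'' clause. The crucial observation is that the correction term $B_{21}B_{11}^{-1}B_{12}$ is quadratic in the small quantities $(B_{12},B_{21})$, so the derivative $d\Phi$ at the basepoint $(A_{inv},\zero,\zero,\zero)$ equals the identity. By continuity of $d\Phi$, the eigenvalue bounds $\lambda_{\min},\lambda_{\max}$ of $d\Phi$ over a germ representative tend to $1$ as the representative shrinks. Applying Lemma~\ref{changediffeo} to $\Phi$ (on the source) and to $\Phi^{-1}$ (on the image), we obtain that the inner (and outer) distances are preserved up to a multiplicative factor arbitrarily close to $1$, which is exactly the stated metric-distortion conclusion.

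The main obstacle is checking this last point cleanly: one has to observe that the product metric on the parameter side (governed by $(B_{11},B_{12},B_{21})$ and $S$ independently) matches the ambient Euclidean metric on the image side up to the quadratic error $B_{21}B_{11}^{-1}B_{12}$ in the bottom-right block. That error vanishes to first order at the basepoint, which is precisely what makes $d\Phi|_0=\mathrm{Id}$ and lets Lemma~\ref{changediffeo} produce bilipschitz constants arbitrarily close to $1$. The symmetric/skew-symmetric variants require only that one restricts to the invariant subspaces and checks that the Schur complement of a symmetric (resp. skew) matrix along a symmetric (resp. skew) invertible block remains symmetric (resp. skew), which is immediate.
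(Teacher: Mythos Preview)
Your proof is correct and takes a more explicit route than the paper's. The paper argues abstractly: after the same isometric normalization of $A$, it computes the tangent space $T_A X_{r_0}$ as the span of block matrices $\left[\begin{smallmatrix} *&*\\ *&\zero\end{smallmatrix}\right]$ (using the transitivity of the $\Gl_m\times\Gl_n$ action on $X_{r_0}$), then invokes the implicit function theorem to rectify $(Mat_{m\times n}(\K),A)$ so that $X_{r_0}$ becomes the first factor $(\K^{mr_0+nr_0-r_0^2},0)\times\{0\}$, and finally asserts that restricting this rectification to $\overline{X_r}$ yields the stated product. Why the normal slice is exactly the smaller determinantal variety is left implicit.

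Your Schur-complement parametrization makes that last step transparent: the identity $\rank(B)=\rank(B_{11})+\rank(S)$ directly converts the condition $\rank(B)\le r$ into $S\in\overline{X_{r-r_0}^{(m-r_0)\times(n-r_0)}}$, so the product decomposition is manifest rather than inferred. Moreover, because the correction $B_{21}B_{11}^{-1}B_{12}$ is quadratic in the off-diagonal blocks, you get $d\Phi|_{0}=\mathrm{Id}$ for free and can quote Lemma~\ref{changediffeo} (applied to the ambient diffeomorphism $\Phi$, with $X$ taken to be the determinantal subset) for the metric-distortion clause; the paper instead appeals only to the qualitative statement that any implicit-function-theorem rectification has small distortion on small neighbourhoods. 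Both arguments are valid; yours is more constructive and fills in the identification of the transverse slice, at the modest cost of the explicit check that the Schur complement preserves (skew\nobreakdash-)symmetry.
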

\begin{proof}
\eqref{321}. Using the linear isometries $U(m)\times U(n)$ we can assume the left/right kernels of $A$ in the form  $(\underbrace{0,\dots,0}_{r_0},*,\dots,*)$,
see paragraph \ref{Sec.Group.Actions}. Therefore $A=A_{inv}\oplus \zero_{(m-r_0)\times(n-r_0)}$, here
$A_{inv}\in Mat_{r_0\times r_0}(\K)$ is invertible.

As the action $\Gl_m\times \Gl_n\circlearrowright X_{r_0}$ is
transitive (and smooth) we write down the tangent space
$T_AX_{r_0}$ as the tangent to the orbit using
the calculation of the tangent space given in \cite{arbarellocornalbagriffiths}:
\begin{align*}
T_AX_{r_0}=Span_\R(V_lA,AV_r)_{\substack{V_l\in Mat_{m\times m}(\K)\\V_r\in Mat_{n\times n}(\K)}}=
Span_\K\Big(\begin{bmatrix} *&*&\\ *&\zero_{(m-r_0)\times(n-r_0)}\end{bmatrix}\Big)
\end{align*}
As the stratum $X_{r_0}$ is smooth (at any of its points), it can be rectified locally near $A$ to its tangent space. Namely, there exists a homeomorphism,
$(Mat_{m\times n}(\K),A)\approx (\K^{mr_0+nr_0-r^2_0},0)\times (\K^{(m-r_0)(n-r_0)},0)$, that sends $(X_{r_0},A)$
to $(T_AX_{r_0},0)\times\{0\}=(\K^{mr_0+nr_0-r^2_0},0)\times \{0\}$.
This homeomorphism is assured by the implicit function theorem and can be chosen "almost metric preserving". More precisely, for any $\epsilon>0$ the distortion
of the distances will be less than $\epsilon$ provided we choose a small enough neighbourhood of $A$ in $X_{r_0}$.

Restricting this homeomorphism to $(\overline{X^{m\times n}_r},A)$ we get the statement.

\eqref{322}. The proof is essentially the same, just here one uses the action
  $A\to V^t AV$, $V\in U(n)$.
\end{proof}

\begin{Lemma}\label{Thm.Path.Connected.Controlled}
\begin{enumerate}[1.]
\item Let $X=Mat_{m\times n}(\K)$. For any $r\le m\le n$ the connected
  components of $X_r$ are "controlled path-connected" near any point
  of $\overline{X_r}$ in the following sense:

for any $A\in \overline{X_r}$ and any $\epsilon>0$ there exists
$\delta=\delta(A,\epsilon)$ such that any points of the ball, $P,Q\in
Ball_\delta(A)\cap X_r$, belonging to the same connected component of
$X_r$, are connected (inside $Ball_\delta(A)\cap X_r$) by a path of
length$<\epsilon$.\label{331}

\item Similarly for the spaces of (skew-)symmetric matrices, $X=Mat^{sym}_{m\times m}(\K)$ or $X=Mat^{skew-sym}_{m\times m}(\K)$, their strata are
controlled path connected at any point.\label{332}
\end{enumerate}
\end{Lemma}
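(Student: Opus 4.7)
The plan is to reduce to the case $A=0$ in a smaller matrix space via the product decomposition of Lemma \ref{loacalalmostlne}, and then to produce short rank-preserving paths by combining the canonical form (SVD for rectangular matrices, spectral decomposition for symmetric, block canonical form for skew-symmetric) with the isometric action of $O(m')\times O(n')$.

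Set $r_0=\rank(A)$, $r'=r-r_0$, $m'=m-r_0$, $n'=n-r_0$. Lemma \ref{loacalalmostlne} gives an almost-metric-preserving homeomorphism of germs $(\overline{X_r^{(m\times n)}},A)\approx (\K^d,0)\times (\overline{X_{r'}^{(m'\times n')}},0)$ which identifies $X_r$ with $\K^d\times X_{r'}^{(m'\times n')}$. The first factor is a linear subspace, so any two of its points are joined by a straight segment of length equal to their Euclidean distance. Combining this with a short path in the $X_{r'}$-factor, and using that the metric distortion of the identification can be made arbitrarily small by shrinking representatives, it suffices to prove the lemma for $A=0$ in the smaller matrix space.

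For $P,Q\in X_{r'}\cap Ball_\delta(0)$ in the same connected component, I write the SVDs $P=U_P\Sigma_P V_P^t$ and $Q=U_Q\Sigma_Q V_Q^t$, using the classification of components from Section \ref{Sec.Connected.Components} to arrange that $U_P,U_Q\in O(m')$ lie in the same component, likewise $V_P,V_Q\in O(n')$, and that $\Sigma_P,\Sigma_Q$ are of the same diagonal canonical shape (so $\Sigma(t):=(1-t)\Sigma_P+t\Sigma_Q$ still has rank $r'$). Choosing paths $U(t),V(t)$ in these components joining $U_P\to U_Q$ and $V_P\to V_Q$, of length bounded by the diameters of $O(m'),O(n')$, I set
\begin{align*}
\gamma(t):=U(t)\Sigma(t)V(t)^t.
\end{align*}
Orthogonal invariance of the Frobenius norm gives $\norm{\gamma(t)}=\norm{\Sigma(t)}\le\max(\norm{P},\norm{Q})\le\delta$, so $\gamma$ stays inside $Ball_\delta(0)$. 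Expanding $\dot\gamma=\dot U\Sigma V^t+U\dot\Sigma V^t+U\Sigma\dot V^t$ and using $\norm{\Sigma(t)}\le\delta$ yields $\norm{\dot\gamma}\le\delta(\norm{\dot U}+\norm{\dot V})+\norm{\dot\Sigma}$; integrating bounds $\text{length}(\gamma)$ by $C\delta$ for a constant $C$ depending only on $m',n'$, and choosing $\delta<\epsilon/C$ completes the proof.

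The symmetric case is handled by the same scheme with the spectral decomposition $P=U\Lambda U^t$ (interpolating $\Lambda$ among diagonal matrices of the prescribed signature), and the skew-symmetric case with the $O$-canonical form built from $2\times 2$ skew blocks; the length estimate is identical. The main obstacle is the bookkeeping of connected components in the real case --- one must verify that the chosen canonical forms, group paths, and interpolations all remain in the prescribed component of $X_{r'}$, so that $\gamma(t)$ never leaves it. This is exactly what the classification in Section \ref{Sec.Connected.Components} enables.
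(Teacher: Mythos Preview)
Your reduction via Lemma~\ref{loacalalmostlne} to $A=0$ in a smaller matrix space is identical to the paper's first step. The two proofs diverge in how they build the short path inside $X_{r'}\cap Ball_\delta(0)$: the paper connects each of $P,Q$ to a common special point $\delta\cdot\one_{r'\times r'}\oplus\zero$ (respectively a signature-adapted diagonal in the (skew-)symmetric cases) by ``applying the Gauss elimination procedure on rows and columns'', whereas you interpolate $P$ to $Q$ directly along $\gamma(t)=U(t)\Sigma(t)V(t)^t$, with $U(t),V(t)$ moving in the orthogonal groups and $\Sigma(t)$ linear between canonical forms of the same sign pattern. Your route yields an explicit bound $\mathrm{length}(\gamma)\le C\delta$ with $C$ depending only on the intrinsic diameters of $O(m'),O(n')$, and the identity $\|\gamma(t)\|=\|\Sigma(t)\|\le\max(\|P\|,\|Q\|)<\delta$ shows the path stays inside $Ball_\delta(0)$ on the nose --- a point the paper's argument glosses over (its special point already has norm $\delta\sqrt{r'}>\delta$ for $r'>1$). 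The price you pay is the connected-component bookkeeping you flag at the end: one must verify that $\Sigma_P,\Sigma_Q$ can be chosen with the same sign pattern and $U_P,U_Q$ (and $V_P,V_Q$) in $SO$, so that both the linear interpolation $\Sigma(t)$ and the group paths remain in the prescribed component of $X_{r'}$. This does work out using the classification of Section~\ref{Sec.Connected.Components}, but the paper's route through a fixed special point sidesteps it entirely, since one only needs that point to lie in the correct component.
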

\begin{proof}
\eqref{331}. Let $\rank(A)=r_0\le r$, by the last lemma there exist
homeomorphisms as on the diagram. 
\begin{align*}
\begin{matrix} (Mat_{m\times n}(\K),A)&\stackrel{\phi}{\isom}&(\K^{mr_0+nr_0-r^2_0},\zero)\times Mat_{(m-r_0)\times (n-r_0)}(\K)
\\\cup&&\cup
\\(\overline{X_r},A)&\isom& (\K^{mr_0+nr_0-r^2_0},\zero)\times(\overline{X^{(m-r_0)\times (n-r_0)}_{r-r_0}},\zero)
\\\cup&&\cup
\\(X_r,A)&\isom&(\K^{mr_0+nr_0-r^2_0},\zero)\times(X^{(m-r_0)\times (n-r_0)}_{r-r_0},\zero).
\end{matrix}
\end{align*}
 Here in the last row we denote by $(X_r,A)$ a small
	neighbourhood of $X_r$ near $A$, even though $A\not\in X_r$. Similarly for $(X^{(m-r_0)\times (n-r_0)}_{r-r_0},\zero)$.
	
	While $\phi$ does not preserve the distances, the distortions are small for small representative, therefore it is
	enough to prove the statement for the presentation on the right.

Write the coordinates of $P,Q$ for this splitting, $P\rightsquigarrow(P_1,P_2)$, $Q\rightsquigarrow(Q_1,Q_2)$,
where $P_1,Q_1\in (\K^{mr_0+nr_0-r^2_0},\zero)$, while $P_2,Q_2\in (X^{(m-r_0)\times (n-r_0)}_{r-r_0},\zero)$.

Now take the paths $(tP_1,P_2)$, $(tQ_1,Q_2)$, where $t\in [0,1]$. Both paths lie inside
$(\K^{mr_0+nr_0-r^2_0},\zero)\times X^{(m-r_0)\times (n-r_0)}_{r-r_0}$, thus their pre-images under $\phi$ lie inside $X_r$.  And the lengths of
both paths are small for $\delta$ small. Therefore it remains to check the points $(0,P_2)$, $(0,Q_2)$, i.e. to connect them by
a short path that lies inside $\{0\}\times X^{(m-r_0)\times (n-r_0)}_{r-r_0}$.

By this transition we have reduced the problem from the case $P,Q,A\in \overline{X^{m\times n}_{r}}$ to the
case, $P_2,Q_2,\zero\in \overline{X^{(m-r_0)\times (n-r_0)}_{r-r_0}}$.
Note that $0<r-r_0\le m-r_0\le n-r_0$. Note that $P_2,Q_2$ still lie in the same connected component of $X^{(m-r_0)\times (n-r_0)}_{r-r_0}$, as the paths
are in $X_r$.

Thus we have to prove:

for any $\epsilon>0$ there exists $\delta=\delta(\epsilon)$ such that any points  $P,Q\in Ball_\delta(\zero)\cap X_r\subset Mat_{m\times n}(\K)$ are connected
(inside $X_r$) by a path of length$<\epsilon$.

Alternatively: {\em any point $P\in Ball_\delta(\zero)\cap X_r$ is connected to
	the special point $\delta\cdot\one_{r\times r}\oplus\zero_{(m-r)\times (n-r)}$ by a path of length$<\epsilon$.} And this later statement is immediate,
apply the Gauss elimination procedure on rows and columns (by $\Gl_m\times \Gl_n$) to get a path of bounded length.

\eqref{332}. For the (skew-)symmetric case the proof is essentially the same, just the special point is now
$\delta\cdot\one\oplus (-\delta\cdot\one)\oplus\zero_{(m-r)\times (n-r)}$ (the sizes depend on the signature) and
instead of the Gauss elimination one uses the action $A\to V^t AV$.
\end{proof}

\section{Lipschitz normality of linear subspaces of the space of
  matrices}\label{modelcase}

\subsection{Lipschitz normality for the closures $\overline{X_r}$}
\begin{Theorem}\label{Thm.Lipshitz.Normality.Closures.of.Strata}
Let $\K\in\R,\C$ and $X$ be one of the spaces $Mat_{m\times n}(\K)$, $Mat^{sym}_{n\times n}(\K)$, $Mat^{skew-sym}_{n\times n}(\K)$.
For any  $1\le r\le m\le n$ and $A,B\in\overline{X_r}$ holds:
$\frac{d_{in}^{\overline{X_r}}(A,B)}{2\sqrt{2}} \le d_{out}(A,B)\le
d^{\overline{X_r}}_{in}(A,B)$.
\end{Theorem}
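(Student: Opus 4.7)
The inequality $d_{out}(A,B)\le d_{in}^{\overline{X_r}}(A,B)$ is immediate from the definition of the inner distance as an infimum of path lengths, so the entire content lies in producing, for any $A,B\in\overline{X_r}$, a rectifiable path inside $\overline{X_r}$ of length at most $2\sqrt{2}\,\|A-B\|_F$.

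My first move would be a case split based on the sign of $\langle A,B\rangle=\trace(A\overline{B^t})$. In the regime $\langle A,B\rangle\le 0$ the variety $\overline{X_r}$ is invariant under multiplication by scalars (it is a cone with apex $\zero$), so the concatenation of the straight segments $A\to\zero$ and $\zero\to B$ lies inside $\overline{X_r}$. Its length is $\|A\|+\|B\|$, and by Cauchy--Schwarz
\begin{align*}
(\|A\|+\|B\|)^2\le 2(\|A\|^2+\|B\|^2)=2\bigl(\|A-B\|^2+2\langle A,B\rangle\bigr)\le 2\|A-B\|^2,
\end{align*}
so this path is already within the target constant, giving in fact the sharper bound $\sqrt{2}\,\|A-B\|$.

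In the remaining regime $\langle A,B\rangle>0$ the two matrices are ``close'' relative to their norms and going through the origin is too wasteful. Here I would exploit the isometric action of $U(m)\times U(n)$ (respectively $O(m)\times O(n)$ if $\K=\R$) recorded in Section \ref{Sec.Group.Actions} to put $A$ in the block-diagonal canonical form $A_{\mathrm{inv}}\oplus\zero$ with $A_{\mathrm{inv}}$ invertible. Around such an $A$, Lemma \ref{loacalalmostlne} gives an almost-isometric product decomposition $(\overline{X_r},A)\approx(\K^d,0)\times(\overline{X^{(m-r_0)\times(n-r_0)}_{r-r_0}},0)$; combined with Proposition \ref{product} this reduces the problem to a smaller determinantal variety at its cone vertex, where Proposition \ref{cone} and the controlled path-connectedness of Lemma \ref{Thm.Path.Connected.Controlled} supply explicit short connecting paths. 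The corresponding argument for $Mat^{sym}$ and $Mat^{skew-sym}$ is identical, using the action $A\mapsto V^tAV$ in place of the two-sided action.

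The main obstacle I anticipate is stitching the local data from the two endpoints into one global path with the uniform constant $2\sqrt{2}$. Concretely I would look for an intermediate point $C\in\overline{X_r}$ such that each of the two sub-problems $A\to C$ and $C\to B$ can be handled by a ``through a cone vertex'' argument, each contributing at most $\sqrt{2}\,\|A-C\|$ and $\sqrt{2}\,\|C-B\|$ to the length; the triangle inequality $\|A-C\|+\|C-B\|\le 2\|A-B\|$ (e.g.\ for $C$ chosen on the segment $[A,B]$, after projection onto $\overline{X_r}$ via a truncated SVD and moved to a kernel-compatible position by the group action of Section \ref{Sec.Group.Actions}) then produces the factor $2\sqrt{2}$. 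The delicate point, and the reason the argument is not completely formal, is ensuring that such a $C$ both lies in $\overline{X_r}$ and satisfies $\langle A-C,C\rangle\le 0$ and $\langle C-B,C\rangle\le 0$ simultaneously; this is where the explicit block-diagonal model and Corollary \ref{chageofcoordinates} (to absorb the bilipschitz distortion from the non-isometric $\Gl_m\times\Gl_n$-action) are indispensable.
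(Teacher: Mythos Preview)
Your first case ($\langle A,B\rangle\le 0$, go through the cone vertex) is fine and gives the stated bound. The second case, however, is not a proof but an outline with an unresolved obstacle that you yourself flag. The tools you reach for there---Lemma \ref{loacalalmostlne}, Proposition \ref{product}, Proposition \ref{cone}, Lemma \ref{Thm.Path.Connected.Controlled}---are all \emph{local} or qualitative: the product decomposition in Lemma \ref{loacalalmostlne} is only ``almost metric preserving'' on small neighbourhoods, with distortion depending on the neighbourhood, so it cannot by itself yield the uniform global constant $2\sqrt{2}$. Your proposed fix via an intermediate point $C$ is not carried out: the conditions $\langle A-C,C\rangle\le 0$ and $\langle C-B,C\rangle\le 0$ are not shown to be simultaneously achievable inside $\overline{X_r}$, and invoking Corollary \ref{chageofcoordinates} to ``absorb'' $\Gl_m\times\Gl_n$-distortion would spoil the explicit constant. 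As written, the argument in the regime $\langle A,B\rangle>0$ has a genuine gap.

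The paper's proof bypasses all of this with a single global construction and no case split. After using the isometric $U(m)\times U(n)$ (or $U(n)$) action to put $A$ in the block form $\Bigl[\begin{smallmatrix} A_1&\zero\\\zero&\zero\end{smallmatrix}\Bigr]$ with $A_1\in Mat_{r\times r}$, write $B=\Bigl[\begin{smallmatrix} B_1&B_2\\B_3&B_4\end{smallmatrix}\Bigr]$ in the same block decomposition and take the explicit path $B(t)=\Bigl[\begin{smallmatrix} B_1&tB_2\\tB_3&t^2B_4\end{smallmatrix}\Bigr]$, $t\in[0,1]$. Scaling rows and columns never increases rank, so $B(t)\in\overline{X_r}$ for all $t$; the $t^2$ in the lower-right block is exactly what keeps the path (skew-)symmetric in those cases. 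This connects $B$ to $B(0)=\Bigl[\begin{smallmatrix} B_1&\zero\\\zero&\zero\end{smallmatrix}\Bigr]$, and then the straight segment from $B(0)$ to $A$ lies in $\overline{X_r}$ because both matrices are supported in the top-left $r\times r$ block. Bounding the length of the curved part by $2\sqrt{\|B_2\|^2+\|B_3\|^2+\|B_4\|^2}$ and applying $x+y\le\sqrt{2(x^2+y^2)}$ gives the $2\sqrt{2}$ constant directly. The key idea you are missing is this explicit scaling path; once you see it, there is no need for any local-to-global patching, intermediate points, or sign cases.
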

\begin{proof}
The inequality on the right is immediate, we prove the one on the left.

We use the group action, $U(m)\times U(n)\circlearrowright Mat_{m\times n}(\K)$, by $A\to UAV$,
and $U(n)\circlearrowright Mat^{sym}_{n\times n}(\K)$,
$Mat^{skew-sym}_{n\times n}(\K)$, by $A\to U^tAU$, to bring $A$ to the
form
\begin{align*}
\begin{bmatrix} A_1&\zero_{r\times (n-r)}\\\zero_{(m-r)\times
    r}&\zero_{(m-r)\times(n-r)}\end{bmatrix}.
\end{align*}
 Here $A_1\in Mat_{r\times r}(\K)$, $Mat^{sym}_{r\times r}(\K)$,
$Mat^{skew-sym}_{r\times r}(\K)$. This action preserves $X_r$, $\overline{X_r}$ and the inner/outer distances. Therefore we can assume $A$ in this form.
Present $B$ accordingly: $\Bigl[\begin{smallmatrix} B_1&B_2\\B_3&B_4\end{smallmatrix}\Bigr]$. Then:
\begin{align*} d_{out}(A,B)=\sqrt{||A_1-B_1||^2+||B_2||^2+||B_3||^2+||B_4||^2}
\end{align*}
This is the distance along the straight segment. We will replace this straight segment by two parts, lying inside $\overline{X_r}$,
whose total length is less than $2d_{out}(A,B)$

Consider the path $B(t)=\Bigl[\begin{smallmatrix} B_1&tB_2\\ tB_3&t^2B_4\end{smallmatrix}\Bigr]$ for $t\in[0,1]$. We claim: $B(t)\in \overline{X_r}$ for any $t\in[0,1]$. Indeed, scaling a particular
row/column does not increase the rank. And in the (skew-)symmetric case $B(t)$ remains (skew-)symmetric.

Therefore we get an algebraic curve (inside $\overline{X_r}$) that connects $B=B(1)$ to $B(0)=\Bigl[\begin{smallmatrix} B_1&\zero\\\zero&\zero\end{smallmatrix}\Bigr]$. The length of this path is:
$\intl^1_0 \sqrt{||B_2||^2+||B_3||^2+4t^2||B_4||^2}dt$.

It remains to move from $B(0)$ to $A$.
In this case the straight segment $\overline{B(0),A}$ lies inside $\overline{X_r}$. In total we get:
\begin{align*}
d^{\overline{X_r}}_{in}(A,B)\le \intl^1_0
\sqrt{||B_2||^2+||B_3||^2+4t^2||B_4||^2}dt+||A_1-B_1||.
\end{align*}
Now we use the bounds 
\begin{align*}
\intl^1_0 \sqrt{||B_2||^2+||B_3||^2+4t^2||B_4||^2}dt<
2\sqrt{||B_2||^2+||B_3||^2+||B_4||^2}
\end{align*}
 and $x+y\le \sqrt{2(x^2+y^2)}$  to get:
\begin{align*}
d^{\overline{X_r}}_{in}(A,B) &<2
\sqrt{||B_2||^2+||B_3||^2+||B_4||^2}+||A_1-B_1||\le\\  
&\le 2\sqrt{2}
\sqrt{||A_1-B_1||^2+||B_2||^2+||B_3||^2+||B_4||^2} =2\sqrt{2} \cdot
d_{out}(A,B).
\end{align*}
\end{proof}

\begin{Remark} The constant $2\sqrt{2}$ is certainly not the best one. For example, for $X=Mat_{m\times n}(\K)$ one can prove
$d_{in}^{(\overline{X_r})}(A,B)\le \sqrt{2}d_{out}(A,B)$ by first going along the straight segment $\Bigl[\begin{smallmatrix} B_1&tB_2\\B_3&tB_4\end{smallmatrix}\Bigr]$,
thus bringing $B$ to the form $\Bigl[\begin{smallmatrix}
  B_1&\zero\\B_3&\zero\end{smallmatrix}\Bigr]$, and then going along
the straight segment
$\Bigl[\begin{smallmatrix}
  tA_1+(1-t)B_1&\zero\\(1-t)B_3&\zero\end{smallmatrix}\Bigr]$.

Probably one can get even better bounds by using the appropriate metric on the
Grassmanians of linear subspaces, $Gr(\K^{m-r},\K^m)$, $Gr(\K^{n-r},\K^n)$ or the Stiefel manifolds.
\end{Remark}

\subsection{Lipschitz normality for connected components of $X_r$}
\begin{Theorem}
Let $\K\in\R,\C$ and $X$ be one of the spaces $Mat_{m\times n}(\K)$, $Mat^{sym}_{n\times n}(\K)$, $Mat^{skew-sym}_{n\times n}(\K)$.
Suppose $A,B$ belong to the same connected component of $X_r$, for some $r\le m$.
Then  $\frac{d_{in}^{X_r}(A,B)}{2\sqrt{2}} \le d_{out}(A,B)\le d^{X_r}_{in}(A,B)$.
\end{Theorem}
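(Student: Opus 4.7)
The plan is to repeat the proof of Theorem \ref{Thm.Lipshitz.Normality.Closures.of.Strata} while keeping the constructed path inside $X_r$, using the explicit description of the connected components from Section \ref{Sec.Connected.Components} to ensure we remain in the component of $A,B$.

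I would first bring $A$ to the canonical block form $\begin{bmatrix} A_1 & \zero \\ \zero & \zero \end{bmatrix}$ by acting with a \emph{connected} subgroup of the isometry group---$SO(m)\times SO(n)$ or $U(m)\times U(n)$ in the generic matrix case, and $SO(n)$ or $U(n)$ in the (skew-)symmetric cases---so that the connected component of $X_r$ is preserved. Writing $B=\begin{bmatrix} B_1 & B_2 \\ B_3 & B_4 \end{bmatrix}$ accordingly, I would reuse the path $B(t)=\begin{bmatrix} B_1 & tB_2 \\ tB_3 & t^2B_4 \end{bmatrix}$ from the previous proof. The crucial new observation is that for every $t\in(0,1]$ one has $B(t)=D_1(t)\,B\,D_2(t)$ with invertible block-diagonal $D_1(t),D_2(t)$, so $\rank B(t)=r$, i.e.\ $B(t)\in X_r$; by continuity, $B(t)$ stays in the component of $B=B(1)$, hence of $A$, for every such $t$. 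Thus the first leg restricted to $t\in[\tau,1]$ for any $\tau>0$ already lies in $X_r$ in the correct component, with length arbitrarily close to $\int_0^1\sqrt{\|B_2\|^2+\|B_3\|^2+4t^2\|B_4\|^2}\,dt$.

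The obstacle is the second leg---the straight segment from $B(0)$ to $A$ across the top-left block---which may exit $X_r$ at the finitely many $s\in(0,1)$ where $(1-s)B_1+sA_1$ is singular. When $r<\min(m,n)$, I would replace it by a perturbed segment with an $\epsilon$-sized term in an unused off-block; a short rank argument shows that a generic such perturbation keeps the rank equal to $r$ for every $s$, and short end-connectors of length $O(\epsilon)$ attach the perturbed path to $B(\tau)$ and to $A$. Since the perturbed path does not meet the rank-deficient locus, the component-defining invariant (sign of $\det$, signature, or Pfaffian, per Section \ref{Sec.Connected.Components}) is constant along it and matches the common invariant of $A,B$. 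In the "intermediate" cases $r=m<n$ or $r=n<m$ the same off-block trick applies.

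The main obstacle will be the case $r=m=n$, where no off-block room remains and the straight segment may genuinely cross into the other component of $X_r$. Here I would replace the straight segment by a polar-type construction: write $A_1=U_AP_A$ and $B_1=U_BP_B$ with $U_A,U_B$ in the connected component of the relevant compact group of isometries and $P_A,P_B$ positive (or the analogue in the (skew-)symmetric cases), and use the path $\gamma(s)=U(s)P(s)$ where $U(s)$ is a geodesic in the compact group from $U_B$ to $U_A$ and $P(s)=(1-s)P_B+sP_A$ the straight segment in the convex cone of positive matrices. This path lies entirely in the correct component of $X_r$; a direct computation of $\|\dot\gamma\|$ should bound its length by $2\sqrt{2}\,d_{out}(A,B)$, using that $\|\dot U\|$ is controlled by the distance $\|U_A-U_B\|$ inside the compact group (which is Lipschitz normally embedded by Proposition \ref{localglobal}). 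Letting $\tau,\epsilon\to 0$ the total length of the constructed path in $X_r$ approaches $2\sqrt{2}\,d_{out}(A,B)$, yielding the theorem. The delicate step is verifying that the polar-type bypass, together with the short end-connectors, respects the constant $2\sqrt{2}$ uniformly across all the variants considered.
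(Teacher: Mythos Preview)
Your first leg---the curve $B(t)$ for $t\in(0,1]$---is handled correctly, and the observation that $B(t)=D_1(t)\,B\,D_2(t)$ with invertible $D_i(t)$ is exactly how the paper ensures this portion stays in $X_r$. The paper differs only in that it first perturbs $B$ generically inside $X_r$ so that $\rank B_1=r$, which lets it take $t$ all the way to $0$; your $\tau\to 0$ dodge is equivalent in spirit.

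The real gap is your treatment of the second leg, especially in the square case $r=m=n$. The polar-type path $\gamma(s)=U(s)P(s)$ does \emph{not} give a length bounded by a fixed multiple of $d_{out}(A,B)$. The obstruction is that the map $A\mapsto U_A$ is not uniformly Lipschitz on $\Gl_n$: one has only $\|U_A-U_B\|\lesssim \sigma_{\min}^{-1}\|A-B\|$, so when $A,B$ approach the boundary the geodesic factor $\|\dot U\|$ blows up relative to $\|A-B\|$, and no reshuffling with $P(s)$ saves the constant $2\sqrt{2}$ uniformly. Worse, in the symmetric case the product $U(s)P(s)$ is not symmetric unless $U(s)$ and $P(s)$ commute, so your path leaves $Mat^{sym}$ altogether; the ``analogue'' you allude to is not specified and is not obvious. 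In the $r<\min(m,n)$ case your off-block perturbation is also underspecified: a generic small off-block term can raise the rank \emph{above} $r$, and if the top-left block drops rank by more than one along the segment a single scalar perturbation cannot restore it.

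The paper takes a different route for this second leg. It uses the straight segment from $A_1$ to $B_1$ and observes that the bad set (where it leaves the prescribed component $\mathcal U$, or where rank drops) is algebraic, hence finite. At those finitely many points it invokes Lemma~\ref{Thm.Path.Connected.Controlled} (controlled path-connectedness) to deform the path by an arbitrarily short detour back into $X_r$. Over $\C$ this already finishes. Over $\R$ the segment may cross into the wrong component on a whole interval; the paper then locates the first and last crossing points $A_1,B_1\in\partial\mathcal U$, drops to the corank-one stratum there, reapplies the block-reduction of Theorem~\ref{Thm.Lipshitz.Normality.Closures.of.Strata}, and uses the single freed row/column to insert a scalar $\epsilon(t)$ whose \emph{sign} is chosen to force the determinant (or Pfaffian) back to the correct side. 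For symmetric matrices this is iterated up to $\lfloor n/2\rfloor$ times. The essential ingredient you are missing is Lemma~\ref{Thm.Path.Connected.Controlled}; without it there is no mechanism to repair the path at the isolated bad points while keeping the extra length arbitrarily small.
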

\begin{proof}
The inequality on the right is obvious, we prove the one on the left.

{\bf Step 1.} (Reduction to the case of $X_n$.)
As in the proof for $\overline{X_r}$ we apply the action of
$U(m)\times U(n)$, or $U(n)$ in the (skew-)symmetric case, to bring
$A$ to the form $\Bigl[\begin{smallmatrix} A_1&\zero\\\zero&\zero\end{smallmatrix}\Bigr]$.  Accordingly $B$ is
brought to $\Bigl[\begin{smallmatrix} B_1& *\\ *& *\end{smallmatrix}\Bigr]$. It might happen that $rank(B_1)<r$. 
To avoid this we can take arbitrarily small but generic deformation of
$B$ inside $X_r$. (For example, apply the group action that adds to 
the first $r$ rows/columns a small but generic linear combination of
all the other rows/columns.) 

Now, as $rank(B_1)=r$, we can take the path $B(t)=\Bigl[\begin{smallmatrix} B_1&t *\\t
*&t^2 *\end{smallmatrix}\Bigr]$, as in the proof for $\overline{X_r}$. As in that proof
the length of this path is less than $2\cdot\sqrt{(\dots)}$.

We arrive to $\Bigl[\begin{smallmatrix} B_1&\zero\\\zero&\zero\end{smallmatrix}\Bigr]$ and it remains to
connect the matrices $\Bigl[\begin{smallmatrix} A_1&\zero\\\zero&\zero\end{smallmatrix}\Bigr]$,  $\Bigl[\begin{smallmatrix}
B_1&\zero\\\zero&\zero\end{smallmatrix}\Bigr]$ inside $X_r$ by a path of the total length
$\le 2d_{out}(A_1,B_1)+\epsilon$. In particular, the initial
question has been reduced to the stratum $X_n$ of square
matrices. Note also: as the path $B(t)$ was fully inside $X_r$, the
points $A_1,B_1$ lie in the same connected component of $X_r$.

{\bf Step 2.}  Let $A,B\in X_n$ where $X=Mat_{n\times n}(\K)$,
$Mat^{sym}_{n\times n}(\K)$ or $Mat^{skew-sym}_{n\times n}(\K)$. 
(For skew-symmetric matrices this implies: $n$ is even.)

\underline{Let $\K=\C$} then all the strata are connected. Consider
the straight segment $[A,B]\subset X$. Its endpoints lie in $X_n$, thus,
by algebraicity of the strata, it intersects $\overline{X_{n-1}}$ in a
finite number of points which is at most  $\deg
(\overline{X_{n-1}})$. Now, by the controlled path connectedness
(Lemma \ref{Thm.Path.Connected.Controlled}), we can deform the path
slightly at each of these point to push it into the stratum
$X_n$. Hence we get a path inside $X_n$ of $\text{length}\le
d_{out}(A,B)+\epsilon$. Together with the path $B(t)$ of step 1 this
finishes the proof.

\

\underline{Suppose $\K=\R$,} let $\mathcal{U}\subset X_n$ be the prescribed
connected component. We construct the needed path from $A$ to $B$
inside $\mathcal{U}$.

{\em The idea of construction.} In the case of $\overline{X_r}$ the straight edge $[A,B]$ was replaced by a straight edge $[A,B(0)]$ and an
algebraic curve from $B(0)$ to $B=B(1)$, such that
\begin{align*}
length\big(A,B(0)\big)+length\big(B(0),B(1)\big)\le 2\sqrt{2}
d_{out}\big(A,B\big), 
\end{align*}
see the proof of Theorem \ref{Thm.Lipshitz.Normality.Closures.of.Strata}. For $\mathcal{U}$ we use the same idea, but we need to split into more paths
to stay inside $\mathcal{U}$. In this way we produce several straight edges, $[A,A_1]$, $[A_1,A_2]$,\dots, $[A_{k-1},A_k],[A_k,B_k]$, and algebraic curves, $(B_k,B_{k-1})$,
$(B_{k-1},B_{k-2})$,\dots, $(B_1,B)$ such that
\begin{align*}
length[A,A_1]&+\cdots+length[A_{k-1},A_k]+length[A_k,B_k]+\\
&+length(B_k,B_{k-1})+\cdots+length(B_1,B)< 2\sqrt{2} d_{out}(A,B)+\epsilon.
\end{align*}
For $X=Mat_{n\times n}(\R)$ or $Mat^{skew-sym}_{n\times n}(\R)$ it is enough to take $k=1$, but for $Mat^{sym}_{n\times n}(\R)$ the number $k$ can
be $\lfloor \frac{n}{2}\rfloor$.
All these paths lie in $\overline{\mathcal{U}}$ and each of them has some points in $\mathcal{U}$, thus (by algebraicity of $\overline{X_r}$) each of the paths lies in $\mathcal{U}$,
except for a finite number of points. At each such point we use the controlled-path-connectedness,
lemma \ref{Thm.Path.Connected.Controlled}, to (slightly) deform the path into $\mathcal{U}$.

\

{\em The construction.}
Fix $A,B\in\mathcal{U}\subset X_n$. The edge $[A,B]$ does not necessarily lie inside $\overline{\mathcal{U}}$, thus (unlike the case $\K=\C$) it cannot be pushed back into $\mathcal{U}$ by
a small deformation. Split the edge $[A,B]$ into the intervals $[A,A_1)$, $[A_1,B_1]$, $(B_1,B]$, where $[A,A_1)\subset\mathcal{U}$,
$(B_1,B]\subset\mathcal{U}$ and $A_1,B_1\in\overline{\mathcal{U}}\setminus\mathcal{U}$. Thus $A_1,B_1\in \overline{X_{n-1}}$, and (after a small-but-generic deformation of $A,B$ inside $\mathcal{U}$)
we can assume $A_1,B_1\in X_{n-1}$. (In the case of $X=Mat^{skew-sym}_{n\times n}(\R)$ the rank drops by two, thus $A_1,B_1\in X_{n-2}$.)
As in the case of $\overline{X_r}$, we can assume (using the $O(n)\times O(n)$ action)
$A_1=\Bigl[\begin{smallmatrix} \widetilde{A}_1&\zero\\\zero&\zero\end{smallmatrix}\Bigr]$, where $\widetilde{A}_1$ is invertible. As in the case of $\overline{X_r}$, we take the algebraic curve
$B_1(t)=\Bigl[\begin{smallmatrix} \widetilde{B}_1& t*\\ t*& t^2*\end{smallmatrix}\Bigr]$, for $t\in[0,1]$. And we can assume $\widetilde{B}_1$ invertible, so this curve lies inside $X_{n-1}$.
(For skew-symmetric matrices the curve lies inside $X_{n-2}$.)

It remains to connect $A_1$ to $B_1(0)$, inside $\overline{\mathcal{U}}$, and to (slightly) deform this path into $\mathcal{U}$.

\begin{itemize}
\item {\em \underline{The case $\mathcal{U}=\Gl^+_n(\R)\subset X=Mat_{n\times n}(\R)$.}} Take the path
\[\begin{bmatrix} t\widetilde{A}_1+(1-t)\widetilde{B}_1&\zero\\\zero&\epsilon(t)\end{bmatrix},
\]
with a continuous function $[0,1]\stackrel{\epsilon(t)}{\to}\R$ that satisfies:
\begin{align*}
&\epsilon(t)\cdot det\Big(t\widetilde{A}_1+(1-t)\widetilde{B}_1\Big)\ge0,\\ &\epsilon(t)=0\ \rm{iff}\ det\Big(t\widetilde{A}_1+(1-t)\widetilde{B}_1\Big)=0\\ &\rm{and} \
|\epsilon(t)|\ll1 \ \text{for any} \ t.
\end{align*}
This path lies inside $\mathcal{U}$ except for a finite number of points, where $det\big(t\widetilde{A}_1+(1-t)\widetilde{B}_1\big)=0$. Now, by
the controlled path-connectedness,  lemma \ref{Thm.Path.Connected.Controlled}, we can deform the path slightly at each of these points into $\mathcal{U}$.
Thus we have
connected (the small deformations of) $A_1,B_1(0)$, inside $\mathcal{U}$, by a path of total length at most $d_{out}(A_1,B_1(0))+\epsilon$.
Together with $B(t)$ this provides the needed path from $A$ to $B$ inside $\mathcal{U}$.

The case $\mathcal{U}=\Gl^-_n(\R)$ is similar.

\item  {\em \underline{The case $X=Mat^{skew-sym}_{n\times n}(\R)$,}}
  here $\mathcal{U} \subset X_n$ is prescribed by the parity of the negative
  values among $\{\lambda_i\}$, see paragraph \ref{Sec.Connected.Components}. Take the path
\[\begin{bmatrix} t\widetilde{A}_1+(1-t)\widetilde{B}_1&\zero&\zero\\\zero&0&\epsilon(t)\\\zero&-\epsilon(t)&0\end{bmatrix},
\]
where a continuous function $[0,1]\stackrel{\epsilon(t)}{\to}\R$ satisfies:
\begin{align*}
\epsilon(t)=0\ \rm{iff}\ det\Big(t\widetilde{A}_1+(1-t)\widetilde{B}_1\Big)=0,\quad  |\epsilon(t)|\ll1 \ \text{for any} \ t,
\end{align*}
and the sign of $\epsilon(t)$ is chosen in such a way that (whenever $det\Big(t\widetilde{A}_1+(1-t)\widetilde{B}_1\Big)\neq0$) the total number of
negative values among $\{\lambda_i\}$ is the one prescribed by $\mathcal{U}$. This path lies inside $\mathcal{U}$, except for a finite number of points where
$det\Big(t\widetilde{A}_1+(1-t)\widetilde{B}_1\Big)=0$. Now use the controlled path-connectedness and proceed as in the case $\mathcal{U}=\Gl_n^+(\R)$.

\item {\em \underline{The case $X=Mat^{sym}_{n\times n}(\R)$}, here $\mathcal{U}=\mathcal{U}_{n_+,n_-}=$symmetric matrices of signature $(n_+,0,n_-)$.}

Suppose at all the points of the edge $[A_1,B_1]$ holds: $n_+(t)\le n_+$, $n_-(t)\le +n_-$. Then we take the path
\[\begin{bmatrix} t\widetilde{A}_1+(1-t)\widetilde{B}_1&\zero\\\zero&\epsilon(t)\end{bmatrix},
\]
and argue as above.

In general on the edge $[\widetilde{A}_1,\widetilde{B}_1]$ there might occur points where one of the conditions $n_+(t)\le n_+$, $n_-(t)\le n_-$ is violated.
And this cannot be corrected by just one factor of $\epsilon(t)$.
Thus we use the reduction on the size of matrix. Split the edge $[A_1,B_1]$ into $[A_1,A_2)$, $[A_2,B_2]$, $(B_2,B_1]$,
where $[A_1,A_2)\subset\overline{\mathcal{U}}\cap X_{n-1}$,
$[B_1,B_2)\subset\overline{\mathcal{U}}\cap X_{n-1}$, and $A_2,B_2\in\overline{\mathcal{U}}\cap \overline{X_{n-2}}$. Push the paths $[A_1,A_2)$,  $[B_1,B_2)$ slightly into $\mathcal{U}$
by $\epsilon(t)$-addition as before. Apply the general method to the edge $[A_2,B_2]$, i.e. by $O(n)\times O(n)$ bring $A_2$ to the canonical form,
then degenerate the corresponding blocks in $B_2$ to zero-blocks. The curve $B_2(1)\rightsquigarrow B_2(0)$ is pushed into $\mathcal{U}$ by
$\begin{bmatrix} \widetilde{B}_2(t)&&\\&\epsilon_1(t)&0\\&0&\epsilon_2(t)\end{bmatrix}$, where $\epsilon_i(t)$ are small corrections as above. Now repeat the process for the edge $[A_2,B_2(0)]$, etc.

After at most $\lfloor\frac{n}{2}\rfloor$ steps we get to some $A_k,B_k$ of rank $\le \lfloor\frac{n}{2}\rfloor$. For them we can take the "corrected" path
\[\begin{bmatrix} t\widetilde{A}_1+(1-t)\widetilde{B}_1&\\&\epsilon_1(t)\\&&\ddots\\&&&\epsilon_k(t)\end{bmatrix},
\]
that lies in $\mathcal{U}$, except for a finite number of points. Now use the controlled path-connectedness.
\end{itemize}\end{proof}

\subsection{Lipschitz normality for transversal intersections with $\overline{X_r}$}

For more general linear subspaces of the space of matrices we have the
following result.

\begin{Proposition}\label{linearEIDS}
Let $V\subset X = Mat_{m\times n}(\K)$ be a linear subspace. Assume that
$V$ intersects $X_r$ transversely for all $r\neq 0$. Then $Y:= V\cap
\overline{X}_r$ is Lipschitz normally embedded. 
\end{Proposition}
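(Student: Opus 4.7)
The plan is to combine Theorem \ref{Thm.Lipshitz.Normality.Closures.of.Strata}, Proposition \ref{bilipschizttrivial}, and the conic propositions \ref{product} and \ref{cone}. First I observe that $Y = V \cap \overline{X_r}$ is a cone with vertex the zero matrix, since $V$ is linear and $\overline{X_r}$ is cut out by the homogeneous $(r+1)\times(r+1)$ minors; write $M := Y \cap S$ for its link on the unit sphere. Since $M$ is a compact semialgebraic set it has only finitely many connected components, and any two of them are separated by positive outer distance.

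Away from the origin, each $A \in Y$ lies in some stratum $X_s$ with $1 \le s \le r$. The rank stratification $\{X_s\}_{s\le r}$ of $\overline{X_r}$ is Whitney regular (the strata are homogeneous orbits of the transitive $\Gl_m \times \Gl_n$-action), hence a locally Lipschitz stratification in the sense of Parusi\'nski; by hypothesis $V$ is transverse to each $X_s$ at $A$ (and on a neighborhood, by openness of transversality). Proposition \ref{bilipschizttrivial}, applied with $X = \overline{X_r}$, which is LNE by Theorem \ref{Thm.Lipshitz.Normality.Closures.of.Strata}, therefore yields that $Y$ is locally Lipschitz normally embedded at every nonzero point.

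It remains to control $Y$ near the vertex. For each $p \in M$, the conic coordinate map $(q,t)\mapsto tq$ is a bilipschitz equivalence between a product neighborhood $M_p \times I$, with $I$ a small open interval around $1$, and a neighborhood of $p$ in $Y$; since the latter is LNE by the previous paragraph, Proposition \ref{product} forces $M_p$ to be LNE, so $M$ is locally LNE at each of its points. Each connected component of $M$ is compact and locally LNE, hence globally LNE by Proposition \ref{localglobal}, and the Remark following Proposition \ref{cone} then promotes this to LNE of the entire cone $Y$. The most delicate points are the two standard ingredients I invoke in passing — that the conic coordinates give a genuine bilipschitz equivalence with $M_p \times I$ on neighborhoods of the link, and that the rank stratification satisfies the Lipschitz hypotheses of Proposition \ref{bilipschizttrivial} — but both are routine and need only be stated carefully.
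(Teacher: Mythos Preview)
Your proof is correct and follows essentially the same route as the paper: apply Proposition~\ref{bilipschizttrivial} to get local LNE of $Y\setminus\{0\}$, pass to the compact link $M$, and finish with the cone proposition. The only differences are cosmetic---the paper obtains local LNE of $M$ by a second application of Proposition~\ref{bilipschizttrivial} (the sphere is transverse to the induced stratification on $Y\setminus\{0\}$) rather than your conic-coordinate-plus-product argument, and it justifies that the rank stratification is locally Lipschitz by citing local analytic triviality along strata rather than Whitney regularity (note that Whitney regularity alone does not imply Parusi\'nski's condition, so your ``hence'' there should be rephrased).
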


\begin{proof}
First  notice that  the stratification of $\overline{X}_r$ is a
locally Lipschitz stratification, since it is locally analytically
trivial along any stratum. Also by
Theorem \ref{Thm.Lipshitz.Normality.Closures.of.Strata}
 $\overline{X}_r$ is Lipschitz normally embedded.
Since $V$ linear then $Y = V\cap \overline{X}_r$ is a cone over its
link, with vertex $0$.

Since $V$ is transverse to all the strata of $\overline{X}_r$ away from
$0,$ then  the stratification of $\overline{X}_r$ induces a
locally Lipschitz trivial stratification on $Y\setminus\{0\}$.

By Proposition \ref{bilipschizttrivial}   $Y\setminus \{0\}$ is locally
Lipschitz normally embedded. The sphere $S$ is transverse to all the
strata of $Y\setminus\{0\},$  here $S\subset Mat_{m\times n}$ is the
sphere of radius $1$ of real codimension $1$ (i.e.\ if $\K=\R$ then
$S=S^{mn-1}$ and if $\K=\C$ then $S=S^{2mn-1}$). Hence we can again use Proposition
\ref{bilipschizttrivial} to conclude that the link $M:=Y\cap S$ is
locally Lipschitz normally embedded. Then by Proposition \ref{localglobal}
 $M$ is Lipschitz normally embedded, and $Y$ is Lipschitz normally
 embedded by Proposition \ref{cone} since it is a cone over $M$.

\end{proof}

It is not true for all linear subspaces $V$ that $V \cap \overline{X}_r$
is Lipschitz normally embedded, as the next example shows. 

\begin{Example}\label{degenerationofcusps}
Let $V\subset Mat_{3\times 3}(\C)$ be the linear subspace given as the
image of the following map $\morf{F}{\C^3}{Mat_{3\times 3}(\C)}$:
\begin{align*}
F(x,y,z)=\left(
\begin{array}{@{} c c c @{}}
x & 0 & z \\ 
y & x & 0 \\ 
0 & y & x
\end{array}
\right).
\end{align*}
Let $Y:= V \cap \overline X_2,$ where $\overline X_2$ is the set of matrices in $Mat_{3\times 3}(\C)$ with zero determinant, which is Lipschitz normally 
embedded by Theorem \ref{Thm.Lipshitz.Normality.Closures.of.Strata}. Hence
one would expect $Y$  to be a nice space. On the other hand
${Y}=V(x^3-y^2z)$, hence it is a family of cusps degeneration to a
line. But ${Y}$ being
Lipschitz normally embedded would imply that the cusp $x^3-y^2=0$ 
is Lipschitz normally embedded by
Proposition \ref{product}, since each non zero point on the $z$-axis has a
neighbourhood which is a product of the cups and the $z$-axis. But the
cusp is not Lipschitz normally embedded by the work of Pham and
Teissier \cite{phamteissier}. Hence
${Y}$ is not Lipschitz normally embedded.
\end{Example}   

The proof of Proposition \ref{linearEIDS} uses the matrix structure of
$Mat_{m\times n}(\K)$, and the naive generalization to more general
varieties does not hold.
\begin{itemize}
\item  The statement "if $X,Y\subset \R^N$ are two
 manifolds intersecting transversally then $X\cap\overline{Y}$ is
 Lipschitz normally embedded"
 does not hold because of the obvious counterexample:
  $Y=\{z=0\}\subset\R^3$, $\overline{X}=\{y^2=x^3\}\subset\R^3$,
  $X=\overline{X}\setminus\{x=0=y\}$.
\item The statement
"if $X,Y\subset \R^N$ are two
 manifolds intersecting transversally, with $\overline{Y}$ Lipschitz
 normal then $X\cap\overline{Y}$ is Lipschitz normally embedded"
 does not hold either.
  e.g. let $\overline{Y}=\{x^2+y^2=z^k\}\subset\R^3$ and $X=\{x=y\}$.
     Then $X\cap \overline{Y}$ is not Lipschitz normally embedded.
\item Consider the embedding$
\R^2\stackrel{j}{\hookrightarrow}X:=Mat_{2\times 2}(\R)$ by
\begin{align*}
(x,y)\to\begin{pmatrix} y&x^{k-1}\\x&y\end{pmatrix}.
\end{align*}
 Then $j(\R^2)$ intersects
transversally   $X_2$ and $X_1$. But
$j(\R^2)\cap\overline{X_1}\approx\{y^2=x^k\}\subset\R^2$ is not
bilipschitz normal at the origin. So the linearity of $V$ is also
important. We will in Section \ref{secgeneralcase} look more on the
case when $V$ is not linear.
\end{itemize}

\section{Lipschitz normality of collections of affine subspaces in $\R^N$}\label{uppertriangularcase}
Fix a (possibly infinite) collection $\{L_i\}$ of affine subspaces in $\R^N$, of (varying) positive dimensions. The union $\cup L_i$ is not
always Lipschitz normally embedded (of course we assume  $\cup L_i$ to be connected).
\begin{Example} The subset $\{x(y^2-1)=0\}\subset\R^2$ is not Lipschitz normally embedded because
$d_{out}\big((t,1),(t,-1)\big)=2$, while
$d_{in}\big((t,1),(t,-1)\big)=2+2t$. \end{Example} In this example the
collection contains two non-intersecting lines. We prove that in
many cases this is the only obstruction to bieng Lipschitz normally embedded.

\subsection{}
As a preparation we recall the definition of the angle between two (intersecting) affine subspaces $L_i,L_j\subset\R^N$. (All the metrics here are outer.)
\begin{itemize}
\item Suppose the intersection is just one point, $L_i\cap L_j=\{0\}$. Define the angle, $\alpha_{L_i,L_j}\in[0,\frac{\pi}{2}]$, via the theorem of cosines:
\begin{align*}
cos(\alpha_{L_i,L_j})=\underset{\substack{x\in L_i\setminus\{0\}\\y\in
    L_j\setminus\{0\}}}{sup}\frac{d^2(x,0)+d^2(y,0)-d^2(x,y)}{2d(x,0)d(y,0)}.
\end{align*}
If $L_i,L_j$ are lines this gives the classical definition, in particular it is independent on the choice of $x,y$.
\item
If $dim(L_i\cap L_j)>0$ fix a point $0\in L_i\cap L_j$ and take the orthogonal complement at $0$:
$(L_i\cap L_j)\oplus \mathcal{N}=\R^N$. Then we define $\alpha_{L_i,L_j}:=\alpha_{(L_i\cap \mathcal{N}),(L_j\cap \mathcal{N})}$. If $dim(L_i)=dim(L_j)$ and $dim(L_i\cap L_j)=dim(L_i)-1$
then $L_i\cap\mathcal{N}$, $L_j\cap \mathcal{N}$ are lines and we get the classical definition.
\end{itemize}
By its definition $\alpha_{L_i,L_j}\in[0,\frac{\pi}{2}]$.
\begin{Lemma}
If $L_i\not\subseteq L_j$ and $L_j\not\subseteq L_i$ then $\alpha_{L_i,L_j}\neq0$.
\end{Lemma}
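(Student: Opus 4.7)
The plan is to argue by contradiction, assuming $\alpha_{L_i, L_j} = 0$ and deriving that one subspace must be contained in the other. The key observation is that, after translating so that the intersection point is the origin, the affine subspaces become linear subspaces and the angle $\alpha$ is computed via inner products of unit vectors. Compactness of unit spheres in finite-dimensional spaces together with the equality case of Cauchy--Schwarz should force a shared direction.

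First I would handle the case where $L_i \cap L_j = \{0\}$. If $\alpha_{L_i, L_j} = 0$, i.e.\ the supremum in the definition equals $1$, then there exist sequences $x_n \in L_i \setminus \{0\}$ and $y_n \in L_j \setminus \{0\}$ with $\langle x_n, y_n\rangle / (\|x_n\|\|y_n\|) \to 1$. Normalizing, and using compactness of the unit spheres of the finite-dimensional subspaces $L_i$ and $L_j$, extract subsequences converging to unit vectors $x^* \in L_i$ and $y^* \in L_j$ with $\langle x^*, y^* \rangle = 1$. The equality case of Cauchy--Schwarz then yields $x^* = y^* \in L_i \cap L_j = \{0\}$, contradicting $\|x^*\| = 1$.

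Next I would reduce the general case ($\dim(L_i \cap L_j) > 0$) to the previous one. Fix a point $0 \in L_i \cap L_j$ and let $\mathcal{N}$ be the orthogonal complement of $L_i \cap L_j$. Because $L_i \cap L_j \subseteq L_i$, the orthogonal decomposition $\mathbb{R}^N = (L_i \cap L_j) \oplus \mathcal{N}$ restricts to $L_i = (L_i \cap L_j) \oplus (L_i \cap \mathcal{N})$, and similarly for $L_j$. Consequently $L_i \subseteq L_j$ if and only if $L_i \cap \mathcal{N} \subseteq L_j \cap \mathcal{N}$, so the non-containment hypothesis carries over; in particular neither $L_i \cap \mathcal{N}$ nor $L_j \cap \mathcal{N}$ is trivial. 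Moreover $(L_i \cap \mathcal{N}) \cap (L_j \cap \mathcal{N}) \subseteq (L_i \cap L_j) \cap \mathcal{N} = \{0\}$, so the first case applies to these two subspaces of $\mathcal{N}$, giving $\alpha_{L_i \cap \mathcal{N},\, L_j \cap \mathcal{N}} > 0$. By the definition recalled just before the lemma, this equals $\alpha_{L_i, L_j}$, contradicting our assumption.

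The only subtle point I expect is the reduction in the second case: verifying that the decomposition $L_i = (L_i \cap L_j) \oplus (L_i \cap \mathcal{N})$ actually holds (which uses $L_i \cap L_j \subseteq L_i$) and that non-containment is preserved under intersecting with $\mathcal{N}$. Everything else is a direct application of compactness and Cauchy--Schwarz.
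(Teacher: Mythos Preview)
Your proof is correct and follows essentially the same route as the paper's: reduce to the case $L_i\cap L_j=\{0\}$ via the definition, then use compactness (you normalize to unit spheres, the paper restricts to an annulus via homogeneity) to show the supremum is attained and hence strictly less than $1$. Your handling of the reduction step is in fact more explicit than the paper's, which simply asserts ``We can assume $L_i\cap L_j$ is just one point'' without spelling out that non-containment passes to $L_i\cap\mathcal{N}$, $L_j\cap\mathcal{N}$.
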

\begin{proof}
We can assume $L_i\cap L_j$ is just one point and move this point to the origin.
In the definition of  $cos(\alpha_{L_i,L_j})$ apply the homogeneous scaling of $\R^N$ to get: $0<\epsilon\le |x|,|y|\le1$. As $x\not\in L_j$ and $y\not\in L_i$ the points
$x,0,y$ are not on one line, thus:
$f(x,y)=\frac{d^2(x,0)+d^2(y,0)-d^2(x,y)}{2d(x,0)d(y,0)}<1$. As $f(x,y)$ is a continuous function on the compact domain,
$(L_i\cap \{\epsilon\le |x|\le1\} )\times (L_j\cap \{\epsilon\le |y|\le1\} )$, it attains its maximum. Therefore
$cos(\alpha_{L_i,L_j})=\underset{\substack{x\in L_i\setminus\{0\}\\y\in L_j\setminus\{0\}}}{sup} f(x,y)<1$.
\end{proof}

\subsection{}
Now we use the angle $\alpha_{L_i,L_j}$ to get the optimal Lipschitz constant.
\begin{Proposition}
Let $X=\cup L_i\subset\R^N$ be the union of affine subspaces. Suppose
$L_i\not\subseteq L_j$ for any $i\neq j$ and the subspaces intersect
pairwise, $L_i\cap L_j\neq\varnothing$.
\begin{enumerate}
\item For any $x,y\in X$ holds:
\begin{align*}\frac{d^{(X)}_{in}(x,y)}{d_{out}(x,y)}
\leq \supl_{i \neq j}
\frac{1}{sin(\frac{\alpha_{L_i,L_j}}{2})}.
\end{align*}\label{521}
\item If the collection is finite then the bound is asymptotically
  sharp, i.e. there exist sequences $\{x_n\}$, $\{y_n\}$ satisfying:
\begin{align*}
\frac{d^{(X)}_{in}(x_n,y_n)}{d_{out}(x_n,y_n)}\to\supl_{i \neq j}
\frac{1}{sin( \frac{\alpha_{L_i,L_j}}{2})}.
\end{align*}\label{522}
\end{enumerate}
\end{Proposition}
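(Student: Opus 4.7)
The plan is the following. For $x,y\in X$, pick indices with $x\in L_i$, $y\in L_j$: if $i=j$ the straight segment lies in $L_i\subset X$ and the bound is trivial, so assume $i\neq j$. Set $M:=L_i\cap L_j$. Any path of the form $x\to p\to y$ with $p\in M$ lies in $L_i\cup L_j\subset X$, so minimizing its length over $p\in M$ gives an upper bound for $d^{(X)}_{in}(x,y)$. After translating a point of $M$ to the origin one uses the orthogonal decompositions $L_i=M\oplus\widetilde L_i$, $L_j=M\oplus\widetilde L_j$ and writes $x=x_M+\widetilde x$, $y=y_M+\widetilde y$; a standard reflection/Snell argument (splitting $p-x_M$ into components along and perpendicular to $y_M-x_M$ within $M$, then reducing to a one-dimensional calculus problem) yields
\[
\min_{p\in M}\bigl(d(x,p)+d(p,y)\bigr)=\sqrt{|x_M-y_M|^2+(|\widetilde x|+|\widetilde y|)^2},
\]
with optimum at $p^{*}=x_M+\tfrac{|\widetilde x|}{|\widetilde x|+|\widetilde y|}(y_M-x_M)$.

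To convert this into the ratio bound I would use the definition of $\alpha:=\alpha_{L_i,L_j}$ as the supremum of cosines of angles between nonzero vectors of $\widetilde L_i$ and $\widetilde L_j$, giving $\widetilde x\cdot\widetilde y\le|\widetilde x||\widetilde y|\cos\alpha$. A direct expansion yields the identity
\[
|\widetilde x|^2+|\widetilde y|^2-2|\widetilde x||\widetilde y|\cos\alpha=(|\widetilde x|+|\widetilde y|)^2\sin^2(\alpha/2)+\tfrac12(|\widetilde x|-|\widetilde y|)^2(1+\cos\alpha),
\]
whence $(|\widetilde x|+|\widetilde y|)^2\sin^2(\alpha/2)\le|\widetilde x-\widetilde y|^2$. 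Combined with $\sin^2(\alpha/2)\le 1$ applied to the $|x_M-y_M|^2$ term and $d_{out}(x,y)^2=|x_M-y_M|^2+|\widetilde x-\widetilde y|^2$, this proves part~(1) after taking the supremum over $i\neq j$.

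For part~(2), since the collection is finite and all $\alpha_{L_i,L_j}>0$ by the preceding lemma, the supremum is a maximum attained at some $(i^*,j^*)$ with angle $\alpha^*$. I would pick unit vectors $u\in\widetilde L_{i^*}$, $v\in\widetilde L_{j^*}$ realizing $u\cdot v=\cos\alpha^*$ and a \emph{generic} basepoint $p\in L_{i^*}\cap L_{j^*}$ avoiding all other $L_k$; setting $x_n:=p+r_nu$, $y_n:=p+r_nv$ with $r_n\to 0^{+}$ gives $d_{out}(x_n,y_n)=2r_n\sin(\alpha^*/2)$, while the path $x_n\to p\to y_n$ yields $d^{(X)}_{in}(x_n,y_n)\le 2r_n$. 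For the matching lower bound, once $r_n<\tfrac13\min_{k\neq i^*,j^*}d(p,L_k)$, any path visiting some $L_k$ with $k\notin\{i^*,j^*\}$ must pass through a point at distance $\ge d(p,L_k)$ from $p$, contributing length $\ge 2(d(p,L_k)-r_n)>2r_n$, while any path staying inside $L_{i^*}\cup L_{j^*}$ must cross $M$ at some $q$ with length $\ge d(x_n,q)+d(q,y_n)\ge 2r_n$ (since $u,v\perp M$). Thus the ratio converges to $1/\sin(\alpha^*/2)$.

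The main obstacle is the degenerate case where $L_{i^*}\cap L_{j^*}\subseteq L_k$ for some $k\neq i^*,j^*$: then no generic $p$ exists and a two-hop path through $L_k$ may in principle be shorter than $2r_n$, breaking the argument above. I expect this to be handled by also requiring $u,v$ to be perpendicular to the enlarged intersections $\widetilde L_{i^*}\cap L_k$ and $\widetilde L_{j^*}\cap L_k$ (proper subspaces of $\widetilde L_{i^*},\widetilde L_{j^*}$ by the hypothesis $L_{i^*},L_{j^*}\not\subseteq L_k$), or by replacing $(i^*,j^*)$ with another pair achieving the supremum, followed by induction on the size of the collection.
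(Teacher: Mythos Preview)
Your proof of part~\eqref{521} is correct and proceeds by a different route from the paper. The paper fixes an arbitrary point $0\in L_i\cap L_j$, applies the law of sines to the triangle $\operatorname{Conv}(x,y,0)$, and finishes with the claim $\alpha_0\ge\alpha_{L_i,L_j}$. That last inequality is immediate when $L_i\cap L_j$ is a single point, but when $\dim(L_i\cap L_j)>0$ it fails for a generic choice of $0$ (take $L_i=\operatorname{span}(e_1,e_2)$, $L_j=\operatorname{span}(e_1,e_3)$, $0=$ origin, $x=e_1+\epsilon e_2$, $y=e_1+\epsilon e_3$). Your argument sidesteps this by first minimising the two--leg length over all $p\in M$, obtaining the closed form $\sqrt{|x_M-y_M|^2+(|\widetilde x|+|\widetilde y|)^2}$, and then comparing it to $d_{out}$ via the clean identity
\[
|\widetilde x|^2+|\widetilde y|^2-2|\widetilde x||\widetilde y|\cos\alpha=(|\widetilde x|+|\widetilde y|)^2\sin^2(\alpha/2)+\tfrac12(|\widetilde x|-|\widetilde y|)^2(1+\cos\alpha).
\]
This is both rigorous and slightly sharper in spirit: it shows that the optimal two--leg path already satisfies the bound, rather than just some path.

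For part~\eqref{522} your approach is again different: the paper sends $|x_n|,|y_n|\to\infty$ so that the other affine subspaces become negligible, whereas you localise near a generic $p\in L_{i^*}\cap L_{j^*}$ and send $r_n\to0$. The obstacle you flag --- the case $L_{i^*}\cap L_{j^*}\subset L_k$ --- is real for your approach, and the paper's one--line argument has an analogous weakness when the subspaces are all linear through the origin (scaling does nothing). Your proposed fix of adjusting $u,v$ within $\widetilde L_{i^*},\widetilde L_{j^*}$ is on the right track but not yet a proof: the extremal directions realising $\cos\alpha^*$ may themselves lie in some $L_k$ (for instance, with $L_1=\operatorname{span}(e_1,e_2)$, $L_2=\operatorname{span}(e_1,e_2+\epsilon e_3)$, $L_3=\operatorname{span}(e_2,e_3)$, the unique pair $(u,v)$ lies in $L_3$). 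In such cases the sharp ratio is still attained, but by sequences with a nontrivial $M$--component (e.g.\ $x_n=e_1+e_2$, $y_n=e_1+e_2+\epsilon e_3$ above), so you would need to allow $x_n,y_n$ to move away from the pure $\widetilde L$--directions rather than just perturb $u,v$.
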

\begin{proof}
\eqref{521} Let $x\in L_i$, $y\in L_j$, the non-trivial case is $i\neq
j$. Fix some $0\in L_i\cap L_j$ and use the theorem of sines for the
triangle $Conv(x,y,0)$:
$\frac{d(0,x)}{sin(\alpha_y)} = \frac{d(0,y)}{sin(\alpha_x)} =
\frac{d(x,y)}{sin(\alpha_{0})}$. Thenone has: 
\begin{align*}
d^{(X)}_{in}(x,y) &\le
d(x,0)+d(0,y) =\frac{d_{out}(x,y)(sin(\alpha_y)+sin(\alpha_x))}{sin(\alpha_0)}=
\\ &=
d_{out}(x,y)\frac{2sin\frac{\alpha_x+\alpha_y}{2}cos(\frac{\alpha_x-\alpha_y}{2})}{sin(\alpha_0)}\le
2d_{out}(x,y)\frac{cos\frac{\alpha_0}{2}}{sin(\alpha_0)}.
\end{align*}
Finally, $\alpha_0\ge\alpha_{L_i,L_j}$ hence $d^{(X)}_{in}(x,y)\le d_{out}(x,y)\frac{1}{sin\frac{\alpha_{L_i,L_j}}{2}}$. This gives the bound.

\

\eqref{522} To prove the asymptotic sharpness note that for
$|x_n|,|y_n|\to \infty$, the main contribution to
$d^{(X)}_{in}(x_n,y_n)$ comes from the paths inside $L_i,L_j$,
while the possible corrections from other affine spaces become negligible.
\end{proof}

We remark that though the statement does not assume finiteness of the
collection $\{L_i\}$, it is not very useful in the infinite case, as there
$\supl_{i \neq j} \frac{1}{sin(\frac{\alpha_{L_i,L_j}}{2})}$ can easily go to infinity.

In this way one can produce many non-Cohen-Macaulay singularities
which are still Lipschitz normally embedded.

\begin{Example}\label{Ex.Triangular.Lip.Norm.corank=1}
Suppose for some $X\subset Mat_{m\times n}(\K)$ the stratum $\overline{X_r}$ consists of linear subspaces. (They
all intersect as $\zero$ belongs to each of them.) Then  $\overline{X_r}$  is Lipschitz normally embedded. For example let $X$ be the subspace of (upper/lower)
triangular matrices in $Mat_{m\times m}(\R)$, then $\overline{X_{m-1}}$ is Lipschitz normally embedded. As all $L_i$ are orthogonal in this case the optimal
Lipschitz constant is $\sqrt{2}$.
\end{Example}


\section{The case of determinantal singularities}\label{secgeneralcase}

In this section we  discuss  Lipschitz normal embeddings
of determinantal singularities. The spaces of matrices we worked with
in the previous sections can be seen as special cases of determinantal
singularities. In this section we  assume that $X=
Mat_{m\times n}(\K)$, hence $\overline{X}_r$ is the matrices of rank less
than or equal to $r$. One could also work with $Mat_{m\times n}^{sym}(\K)$ or
$Mat_{m\times n}^{skew-sym}(\K)$ but for simplicity we will restrict
our discussion to $Mat_{m\times n}(\K)$. 

Let $\morf{F}{(\K^N,0)}{(Mat_{mn}(\K),0)}$ be an analytic
map germ. Then $Y:=F\inv(\overline{X}_r)$ is a \emph{determinantal variety of
type} $(m,n,r+1)$ if $\codim (Y)=\codim (\overline{X}_r)$, here we
assume that $r<\min\{m,n\}$. Following \'Ebeling and
Guse{\u\i}n-Zade \cite{ebelingguseinzade} a determinantal singularity $Y=F\inv(\overline{X}_r)$ has an \emph{essentially isolated singularity} at the origin
 (EIDS for
short) if there is a  neighbourhood $U$ of the origin, such
that $F|_{U\setminus \{0\}}$ is transversal to the stratification  of
$X_r.$ That is, for every $x \in U\setminus \{0\},$   rank of $F(x)=s,
\, 0\leq s \leq r,$ then $F$ is transversal to $X_s$ at $x.$  Any ICIS
is an EIDS of type $(m,1,1)$.

With the  notion of determinantal singularities
Proposition \ref{linearEIDS} becomes the following:
\begin{Theorem}\label{EIDS2}
Let $Y$ is an EIDS defined by a linear map-germ
$\morf{F}{\K^N}{Mat_{m\times n}}$, then $Y$ is Lipschitz normally embedded. 
\end{Theorem}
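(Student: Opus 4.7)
The plan is to reduce this to Proposition \ref{linearEIDS} by separating the image of $F$ from its kernel. Write $V := F(\K^N) \subset Mat_{m\times n}(\K)$ and fix the orthogonal decomposition $\K^N = \ker(F) \oplus W$ with $W = \ker(F)^\perp$. The restriction $F|_W \colon W \to V$ is a linear isomorphism of Euclidean spaces, hence bilipschitz by Lemma \ref{changediffeo}. Since $F$ vanishes on $\ker F$, a point $y = y_K + y_W \in \K^N$ (with $y_K \in \ker F$, $y_W \in W$) satisfies $F(y) \in \overline{X_r}$ precisely when $F(y_W) \in V \cap \overline{X_r}$. Consequently
\begin{align*}
Y \;=\; \ker(F) \,\times\, F|_W^{-1}\bigl(V \cap \overline{X_r}\bigr)
\end{align*}
is a product with respect to this orthogonal decomposition.

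By Proposition \ref{product} it then suffices to show that each factor is Lipschitz normally embedded. The first factor is a linear subspace of $\K^N$, hence trivially LNE. The second factor is bilipschitz equivalent to $V \cap \overline{X_r}$ via $F|_W$, so it is LNE if and only if $V \cap \overline{X_r}$ is. To apply Proposition \ref{linearEIDS} to the linear subspace $V$, I need to verify that $V$ is transverse to $X_s$ at every point of $V \cap X_s$ for each $1 \leq s \leq r$ (strata with $s > r$ do not meet $\overline{X_r}$, so are irrelevant). Take $v \in V \cap X_s$ with $v \neq 0$ and write $v = F(x)$ for some $x \in \K^N \setminus \ker F$. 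The EIDS hypothesis says that $F$ is transverse to $X_s$ at $x$, i.e.\ $dF_x(T_x \K^N) + T_v X_s = T_v Mat_{m\times n}(\K)$. Since $F$ is linear, $dF_x(T_x \K^N) = V$, so $V + T_v X_s = T_v Mat_{m\times n}(\K)$, which is precisely the required transversality.

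I expect no serious obstacle here: the main geometric content is already contained in Proposition \ref{linearEIDS}, and the only issue not addressed there is that $F$ need not be injective. The orthogonal kernel/image splitting handles this cleanly without distorting the metric (by Lemma \ref{changediffeo}), and the translation between transversality of $F$ and transversality of $V$ is immediate from the linearity of $F$.
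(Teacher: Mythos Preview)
Your argument is correct and follows essentially the same route as the paper's proof: split $\K^N$ as $\ker F$ direct sum a complement, identify $Y$ with the product of $\ker F$ and (a bilipschitz copy of) $\im F\cap\overline{X_r}$, and invoke Proposition~\ref{linearEIDS} together with Proposition~\ref{product}. You are in fact slightly more careful than the paper, which simply cites Proposition~\ref{linearEIDS} without explicitly checking its transversality hypothesis; your derivation of $V+T_vX_s=Mat_{m\times n}$ from the EIDS condition fills that in (note only that the EIDS hypothesis is stated on a neighbourhood $U$ of the origin, so one should first scale $x$ into $U$---this is harmless since $F$ is linear and the strata $X_s$ are cones).
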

\begin{proof}
If $F$ is injective then this is just a reformulation of Proposition
\ref{linearEIDS}. 
So assume that $F$ is not injective, then we can decompose $\K^N$ as
$\ker (F)\oplus V$, where $F$ induces an isomorphism from $V$ to $\im
(F)$. Hence $Y=F\inv(\overline{X_r})$ is isomorphic to $\ker (F)\oplus
(\im (F)\cap \overline{X_r}).$  Now $\ker(F)$ is a linear space and
hence Lipschitz normally embedded and $\im (F)\cap \overline{X_r}$
is Lipschitz normally embedded by Proposition \ref{linearEIDS}, hence $Y$ is
Lipschitz normally embedded by Proposition \ref{product}. 
\end{proof}

We can make a more general statement in Theorem \ref{EIDS2}. Take the group $G= \mathcal R \times \mathcal H$ acting on the space of map-germs
    $F: (\K^N,0) \to ( M_{m,n}(\K),0)$ where $\mathcal R$ is the group
    of germs of diffeomorphisms in
    $(\K^N,0)$ and $\mathcal H$ is the group $GL_m(\mathcal O_N) \times GL_n(\mathcal O_N)$, given
    by invertible matrices with entries in $(\mathcal O_N,0)$ (see for
    instance Fr\"uhbis Kr\"uger and Neumer's
    \cite{fruhbiskrugerneumer}). As a consequence of Theorem
    \ref{EIDS2} and Lemma \ref{changediffeo} we can state the
    following:
\begin{Corollary} If $\morf{F}{(\K^N,0)}{(Mat_{m\times n}(\K),0)}$ is
  $G$-equivalent to a linear EIDS, then Thoerem \ref{EIDS2} holds.
\end{Corollary}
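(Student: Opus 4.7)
The strategy is to reduce to Theorem \ref{EIDS2} by observing that the $\mathcal H$-factor of $G$ does not affect the preimage $F^{-1}(\overline{X_r})$, while the $\mathcal R$-factor only contributes a bilipschitz change of coordinates which preserves Lipschitz normal embeddedness.

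Unfolding the $G$-equivalence of $F$ with a linear EIDS map-germ $F_0$ gives an identity
\begin{equation*}
F(x)=U(x)\, F_0(\phi(x))\, V(x),
\end{equation*}
valid on some neighbourhood of $0$, where $\phi\in\mathcal R$ is a diffeomorphism germ and $(U,V)\in \Gl_m(\mathcal{O}_N)\times \Gl_n(\mathcal{O}_N)$. Since $U(x)$ and $V(x)$ are invertible at every point near $0$, one has $\rank F(x)=\rank F_0(\phi(x))$, so that $F(x)\in\overline{X_r}$ if and only if $F_0(\phi(x))\in\overline{X_r}$. Therefore
\begin{equation*}
Y=F^{-1}(\overline{X_r})=\phi^{-1}\bigl(F_0^{-1}(\overline{X_r})\bigr)=\phi^{-1}(Y_0),
\end{equation*}
and by Theorem \ref{EIDS2} the germ $Y_0$ is Lipschitz normally embedded. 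Thus the $\mathcal H$-factor plays no role at the level of the underlying set, and it suffices to show that $\phi^{-1}$ carries the property of Lipschitz normal embeddedness from $Y_0$ to $Y$.

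For this, restrict $\phi$ to a small compact neighbourhood $W$ of $0$; on $W$ the Jacobian $\tfrac{d\phi}{dx}$ is continuous and everywhere invertible, hence its singular values are uniformly bounded above and bounded away from $0$. Lemma \ref{changediffeo} (whose proof works verbatim on any path-connected domain where the requisite Jacobian bounds hold) then produces bilipschitz bounds between the inner metrics of $Y\cap W$ and $Y_0\cap\phi(W)$, while the mean value inequality applied to $\phi$ and $\phi^{-1}$ gives the matching bounds for the outer metrics. Since Lipschitz normal embeddedness is a bilipschitz invariant, $Y$ inherits the property from $Y_0$, proving the corollary. The only mildly delicate step is passing from the global formulation of Lemma \ref{changediffeo} on $\R^N$ to the germ situation, but this is routine by the localisation argument just sketched.
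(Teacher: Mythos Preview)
Your proposal is correct and follows exactly the route the paper intends: the paper does not write out a proof but simply declares the corollary a consequence of Theorem~\ref{EIDS2} and Lemma~\ref{changediffeo}, and you have accurately unpacked that claim by observing that the $\mathcal H$-factor preserves rank (hence the preimage of $\overline{X_r}$) while the $\mathcal R$-factor is handled by Lemma~\ref{changediffeo} after localising to a compact neighbourhood to obtain the required Jacobian bounds.
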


Whether a determinantal singularity is Lipschitz normally embedded is
in general a more difficult question than for singularities in the
space of matrices. One cannot in general expect a determinantal
singularity to be Lipschitz normally embedded, the easiest way to see
this is to note that all ICIS are determinantal, and that there are
many ICIS that are not Lipschitz normally embedded. For example
among the simple complex surface singularities $A_n$, $D_n$, $E_6$, $E_7$ and
$E_8$ only the $A_n$'s are Lipschitz normally embedded. Since the
structure of determinantal singularities does not give us any new
tools to study ICIS, we will probably not be able to say when an ICIS
is Lipschitz normally embedded. Since $F\inv(\overline{X}_0)$
is often an ICIS, we probably have to assume it is Lipschitz normally
embedded to say 
anything about whether $F\inv(\overline{X}_t)$ is Lipschitz normally
embedded. But before we discuss such assumption further, we will see
what went wrong in our Example \ref{degenerationofcusps} and give
some more examples of determinantal singularities that are Lipschitz
normally embedded and some that are not.

In Example \ref{degenerationofcusps},
$Y_0:= F\inv(\overline{X}_0)$ is a point and $Y_1:= F\inv(\overline{X}_1)$ is
a line, so both $Y_0$ and $Y_1$ are Lipschitz normally embedded. So it
does not in general follows that if $Y_i$ is Lipschitz normally
embedded then $Y_{i+1}$ is. Now the singularity in Example
\ref{degenerationofcusps} is not an EIDS since $F^{-1}(\overline X_1)$
does not have the expected dimension (the expected dimension is
$-1$). In the next example we will see that EIDS is not enough either.

\begin{Example}[Simple Cohen-Macaulay codimensional 2 surface singularities]\label{scmc2ss}
In \cite{fruhbiskrugerneumer} Fr\"uhbis-Kr\"uger and Neumer classify
simple complex Cohen-Macaulay codimension 2 singularities. They are all EIDS of
type $(3,2,2)$, and the surfaces correspond to the
rational triple points classified by Tjurina \cite{tjurina}. We will
look closer at two of such families. First we have the family
given by the matrices:
\begin{align*}
\left(
\begin{matrix}
z & y+w^l & w^m \\ 
w^k & y & x
\end{matrix}
\right).
\end{align*} 
This family corresponds to the family of triple points in
\cite{tjurina} called $A_{k-1,l-1,m-1}$. Tjurina shows that the dual
resolution graph of their minimal resolution are:
$$
\xymatrix@R=6pt@C=24pt@M=0pt@W=0pt@H=0pt{
&&\\
\overtag{\Circ}{-2}{8pt}\dashto[rr] &
{\hbox to 0pt{\hss$\underbrace{\hbox to 80pt{}}$\hss}}&
\overtag{\Circ}{-2}{8pt}\lineto[r] &
\overtag{\Circ}{-3}{8pt}\lineto[r]\lineto[d] &
\overtag{\Circ}{-2}{8pt}\dashto[rr] &
{\hbox to 0pt{\hss$\underbrace{\hbox to 80pt{}}$\hss}}&
\overtag{\Circ}{-2}{8pt}\\
&{k-1}&& \righttag{\Circ}{-2}{8pt}\dashto[dddd] &&{l-1}\\
&&&&\\
&&&&\blefttag{\quad}{m-1\begin{cases} \quad \\
    \ \\ \ \end{cases}}{10pt} & \\
&&&&\\
&&& \righttag{\Circ}{-2}{8pt} & .\\
&&}$$ 
Using Remark 2.3 of \cite{spivakovsky} we see that these singularities
are minimal, and hence by the result of \cite{normallyembedded} we get
that they are Lipschitz normally embedded.

The second family is given by the matrices:
\begin{align*}
\left(
\begin{matrix}
z & y+w^l & xw \\ 
w^k & x & y
\end{matrix}
\right).
\end{align*} 
Tjurina calls this family $B_{2l,k-1}$ and give the dual resolution
graphs of their minimal resolutions as:
$$
\xymatrix@R=6pt@C=24pt@M=0pt@W=0pt@H=0pt{
&&&& \overtag{\Circ}{-2}{8pt} &\\
&&&&&\\
\overtag{\Circ}{-2}{8pt}\dashto[rr] &
{\hbox to 0pt{\hss$\underbrace{\hbox to 65pt{}}$\hss}}&
\overtag{\Circ}{-2}{8pt}\lineto[r] &
\overtag{\Circ}{-3 \hspace{7pt}}{8pt}\lineto[r] &
\overtag{\Circ}{-2 \hspace{20pt}}{8pt}\lineto[r] \lineto[uu]&
\overtag{\Circ}{-2}{8pt}\dashto[rr]
&{\hbox to 0pt{\hss$\underbrace{\hbox to 80pt{}}$\hss}}&
\overtag{\Circ}{-2}{8pt}\\
&2l&&& &&k-3.& \\
&&}$$ 
Following Spivakovsky this is not a minimal singularity, and since it
is rational according to Tjurina it is not Lipschitz normally embedded
by the result of \cite{normallyembedded}.

These two families do not look very different but one is Lipschitz
normally embedded and the other is not. We can do the same for all simple
Cohen-Macaulay codimension 2 surfaces, and using the results in
\cite{normallyembedded}, that rational surface singularities are
Lipschitz normally embedded if and only if they are minimal, we get
that only the family $A_{l,k,m}$ is Lipschitz normally
embedded. This is similar to the case of codimension 1, since only the
$A_n$ singularities are Lipschitz normally embedded among the simple
singularities. 
\end{Example}

So as we see in Example \ref{scmc2ss} being an EIDS with singular
set Lipschitz normally embedded, is not enough to ensure the variety is
Lipschitz normally embedded. One should notice that the varieties
in Example \ref{degenerationofcusps} and \ref{scmc2ss} are both
defined by maps $\morf{F}{\C^N}{Mat_{m\times n}}$ where $N<mn$. This
means that one should think of the singularity as a section of
$\overline{X}_t$, but being a subspace of a Lipschitz normally embedded space
does not imply the Lipschitz normally embedded condition. 
If $N\geq mn$ then 
one can think about the singularity being a fibration over $\overline{X}_t$,
and as we saw in Proposition \ref{product} products of Lipschitz
normally embedded spaces are Lipschitz normally embedded. Now in this
case $Y_0=F\inv(\overline{X}_0)$ is ICIS if $Y$ is an EIDS, which means that we
probably can not say anything general about whether it is Lipschitz
normally embedded or not. So natural assumptions would be to assume
that $Y$ is an EIDS and that $Y_0$ is Lipschitz normally embedded.

\section*{Acknowledgements}
The second and third author would like to thank Walter Neumann and
Anne Pichon for 
first letting us know 
about Asuf Shachar's question on Mathoverlfow.org, and for helpful
comments about the manuscript. We would also like to thank Lev Birbrair for
sending us an early version of the paper by Katz, Katz, Kerner and
Liokumovich  \cite{kerneretc}, and encouraging us to work on
the problem. We also thank Nguyen Xuan Viet Nhan for help with the proof of
Proposition \ref{bilipschizttrivial}. The first author was supported by the grant FP7-People-MCA-CIG, 334347, the second author was supported by FAPESP grant
2015/08026-4 and the third author was partially supported by FAPESP
grant 2014/00304-2 and CNPq grant 306306/2015-8.

\bibliography{general}

\end{document}